 \newtheorem{thm}{Theorem}[section]
 \newtheorem{cor}[thm]{Corollary}
 \newtheorem{prop}[thm]{Proposition}
 \theoremstyle{definition}
 \newtheorem{defn}[thm]{Definition}
 \theoremstyle{remark}
 \newtheorem{rem}[thm]{Remark}
 \numberwithin{equation}{section}
\newcommand{\dis}{\displaystyle}
\newcommand{\dt}{\partial_{t}}
\newcommand{\0}{\mathbf{0}}
\newcommand{\A}{{\mathcal A}}
\newcommand{\Gs}{G^{s}}
\newcommand{\cB}{\mathcal B}
\newcommand{\bk}{\textbf{\textit{k}}}
\newcommand{\bj}{\textbf{\textit{j}}}
\newcommand{\bl}{\textbf{\textit{l}}}
\newcommand{\bx}{\textbf{\textit{x}}}
\newcommand{\bb}{\textbf{\textit{b}}}
\newcommand{\bbj}{\bb^{(j)}}
\newcommand{\N}{\mathbb N}
\newcommand{\T}{\mathbb T}
\newcommand{\Z}{\mathbb Z}
\newcommand{\R}{\mathbb R}
\newcommand{\st}{\tilde{\theta}_0}
\newcommand{\bt}{\Theta}
\newcommand{\summ}{\sum_{\bj+\bk=\bl;\bj,\bk,\bl\in\mathbb{Z}^{d}_*}}
\newcommand{\suma}{\sum_{(\bj,\bk,\bl)\in\A}}
\newcommand{\sumb}{\sum_{(\bj,\bk,\bl)\in\cB}}
\numberwithin{equation}{section}
\numberwithin{theorem}{section}
\numberwithin{figure}{section}
\begin{document}

%
%
%
%
%
%
%
%
%

\title[Ill/well-posedness of active scalar equations]
 {Ill/well-posedness of non-diffusive active scalar equations with physical applications}

\author{Susan Friedlander}

\address{Department of Mathematics\\
University of Southern California}

\email{susanfri@usc.edu}

\author{Anthony Suen}

\address{Department of Mathematics and Information Technology\\
The Education University of Hong Kong}

\email{acksuen@eduhk.hk}

\author{Fei Wang}

\address{School of Mathematical Sciences, CMA-Shanghai \\
Shanghai Jiao Tong University}

\email{fwang256@sjtu.edu.cn}


\date{}

\keywords{Magneto-geostrophic models, incompressible porous media equation, active scalar equations, Gevrey-class solutions, Hadamard ill-posedness.}

\subjclass{76D03, 35Q35, 76W05}

\begin{abstract}
We consider a general class of non-diffusive active scalar equations with constitutive laws obtained via an operator $\mathbf{T}$ that is singular of order $r_0\in[0,2]$. For $r_0\in(0,1]$ we prove well-posedness in Gevrey spaces $G^s$ with $s\in[1,\frac{1}{r_0})$, while for $r_0\in[1,2]$ and further conditions on $\mathbf{T}$ we prove ill-posedness in $G^s$ for suitable $s$. We then apply the ill/well-posedness results to several specific non-diffusive active scalar equations including the magnetogeostrophic equation, the incompressible porous media equation and the singular incompressible porous media equation.
\end{abstract}

\maketitle
\section{Introduction}\label{introduction}

Active scalar equations belong to a class of partial differential equations where the evolution in time of a scalar quantity is governed by the motion of the fluid in which the velocity itself varies with this scalar quantity. This kind of differential equations comes from numerous physical models such as the Navier-Stokes, Euler, or magneto-hydrodynamic (MHD) equations, all of them have great importance in practical applications ranging from fluid mechanics to atmospheric science, oceanography and geophysics; see for example \cite{CCCGW12, CFG11, CGO07, FGSV12, FRV12, FRV14, FS15, FS18, FS19, FS21b, FV11a, FV11b, KV09, KVW16} and the references therein. They have been a topic of considerable study in recent years, in part because they arise in many physical models and in part because they present challenging nonlinear PDEs. The physics of an active scalar equation is encoded in the constitutive law that relates the transport velocity vector $u$ with a scalar field $\theta$. This law produces a differential operator that when applied to the scalar field determines the velocity. The singular or smoothing properties of this operator are closely connected with the mathematics of the nonlinear advection equation for $\theta$. From the mathematical point of view, these active scalar equations preserve some essential difficulties of the full fluid equations, which give rise to interesting and challenging problems. In this present paper we study issues of ill/well-posedness in Gevrey spaces when the nonlinear term is singular and there is no compensating effect of diffusion. More precisely, we consider a general class of non-diffusive active scalar equations with constitutive laws obtained via an operator $\mathbf{T}$ that is singular of order $r_0\in[0,2]$. For $r_0\in(0,1]$ we prove well-posedness in Gevrey spaces $G^s$ with $s\in[1,\frac{1}{r_0})$, while for $r_0\in[1,2]$ and further conditions on $\mathbf{T}$ we prove ill-posedness in $G^s$ for some suitable values of $s$. Our main results are given in Theorem~\ref{local existence Gevrey thm general} and Theorem~\ref{local ill-posed Gevrey thm general}.

We address the following abstract class of active equations in $\T^d\times[0,\infty)=[0,2\pi]^d\times(0,\infty)$, where $d\ge2$ is the spatial dimension:
\begin{align}
\label{general abstract active scalar eqn nondiffusive} 
\left\{ \begin{array}{l}
\partial_t\theta+u\cdot\nabla\theta=0, \\
\nabla\cdot u=0,\,\,\,u=\mathbf{T}\theta,\,\,\,\theta(\bx,0)=\theta_0(\bx).
\end{array}\right.
\end{align}
The $d$-dimensional singular vector field $u$ is obtained from the unknown scalar field $\theta$ via the Fourier multiplier operator $\mathbf{T}$, which is given explicitly in term of the Fourier symbol $$\widehat{\mathbf{T}} = (\widehat{\mathbf{T}}_1,\dots,\widehat{\mathbf{T}}_{d-1},\widehat{\mathbf{T}}_{d})$$ with $\widehat{\mathbf{T}}:\mathbb{Z}^d\to\R^d$. We also write $\bk'$ to denote the $d-1$ dimensional vector $(k_1, . . . , k_{d-1}) \in \mathbb{Z}^{d-1}$ and we assume the following condition hold:
\begin{align}\label{singular operator condition}
\mbox{$|\widehat{\mathbf{T}}(\bk)|\le C|\bk|^{r_{0}}$ for all $\bk\in\mathbb{Z}^d$,}
\end{align}
where $r_{0}\in[0,2]$ and $C>0$. Without loss of generality we assume that $\dis\int_{\T^d}\theta(\bx,t)d\bx=0$ for all $t\ge0$, since the mean of $\theta$ is conserved by the flow. 

Our motivation for addressing such a class of active scalar equations mainly comes from two different physical systems. The first example comes from MHD and a physical model proposed by Moffatt and Loper \cite{ML94, M08} for magenetostrophic turbulence in the Earth's fluid core. Under the postulates in \cite{ML94}, the governing equation reduces to a three dimensional active scalar equation in $\mathbb{T}^3\times(0,\infty)$ for a temperature field $\theta$, which is given by
\begin{align}
\label{abstract active scalar eqn MG intro}  
\left\{ \begin{array}{l}
\partial_t\theta+u\cdot\nabla\theta=0 \\
u=\mathbf{M}\theta,\,\,\,\theta(\bx,0)=\theta_0(\bx).
\end{array}\right.
\end{align}
Here $\mathbf{M}=(\mathbf{M}_1,\mathbf{M}_2,\mathbf{M}_3)$ is a Fourier multiplier operator with symbols $\mathbf{M}_1$, $\mathbf{M}_2$, $\mathbf{M}_3$ given explicitly by
\begin{align}
\widehat{\mathbf{M}}_1(\bk)&=\frac{2\Omega k_2k_3|\bk|^2-(\beta^2/\eta)k_1k_2^2k_3}{4\Omega^2k_3^2|\bk|^2+(\beta^2/\eta)^2k_2^4},\label{MG Fourier symbol_1}\\
\widehat{\mathbf{M}}_2(\bk)&=\frac{-2\Omega k_1k_3|\bk|^2-(\beta^2/\eta)k_2^3k_3}{4\Omega^2k_3^2|\bk|^2+(\beta^2/\eta)^2k_2^4},\label{MG Fourier symbol_2}\\
\widehat{\mathbf{M}}_3(\bk)&=\frac{(\beta^2/\eta)k_2^2(k_1^2+k_2^2)}{4\Omega^2k_3^2|\bk|^2+(\beta^2/\eta)^2k_2^4},\label{MG Fourier symbol_3}
\end{align}
where $\beta$, $\eta$, $\Omega$ are some fixed positive constants. The expressions \eqref{MG Fourier symbol_1},  \eqref{MG Fourier symbol_2} and \eqref{MG Fourier symbol_3} hold for all $\bk\in\{(k_1,k_2,k_3)\in\mathbb{Z}^3:k_3\neq0\}$, and for the self-consistency of the model we take 
\begin{align*}
\widehat{\mathbf{M}}(k_1,k_2,0) = (0,0,0),\qquad k_1,k_2\in\mathbb{Z}.
\end{align*}
The system \eqref{abstract active scalar eqn MG intro}-\eqref{MG Fourier symbol_3} is known as the non-diffusive magenetostrophic (MG) equation. We refer the readers to~\cite{FV11a, FV11b, FRV14, FS15, FS18, FS21b} for more results concerning the MG equation.

The second physical example comes from a two dimensional model derived via Darcy’s law for the evolution of a flow in a porous medium, which is given by an active scalar equation in $\mathbb{T}^2\times(0,\infty)$
\begin{align}
\label{abstract active scalar eqn IPM intro}  
\left\{ \begin{array}{l}
\partial_t\theta+u\cdot\nabla\theta=0 \\
u=\mathbf{I}\theta,\,\,\,\theta(\bx,0)=\theta_0(\bx).
\end{array}\right.
\end{align}
Here $\mathbf{I}=(\mathbf{I}_1,\mathbf{I}_2)$ is a Fourier multiplier operator with symbols $\mathbf{I}_1$, $\mathbf{I}_2$ given explicitly by
\begin{align}
\widehat{\mathbf{I}}_1(\bk)&=\frac{-k_1k_2}{k_1^2+k_2^2},\label{IPM Fourier symbol_1}\\
\widehat{\mathbf{I}}_2(\bk)&=\frac{k_1^2}{k_1^2+k_2^2},\label{IPM Fourier symbol_2}
\end{align}
The expressions \eqref{IPM Fourier symbol_1} and \eqref{IPM Fourier symbol_2} hold for all $\bk\in\mathbb{Z}^2_{*}$, and for the self-consistency of the model we take 
\begin{align*}
\widehat{\mathbf{I}}(0,0) = (0,0).
\end{align*}
The system \eqref{abstract active scalar eqn IPM intro}-\eqref{IPM Fourier symbol_2} is called the incompressible porous media (IPM) equation. We refer the readers to~\cite{CFG11, CGO07, FGSV12, FS19} for more results on the IPM equation and the singular incompressible porous media (SIPM) equation.

The purpose of our current paper is to address the ill/well-posedness of the abstract class of active scalar equations \eqref{general abstract active scalar eqn nondiffusive} under particular regimes for $r_0$ defined in \eqref{singular operator condition}. The paper is organised as follows. In Section~\ref{wellposed in Gevrey class general} we prove local well-posedness in Gevrey classes for \eqref{general abstract active scalar eqn nondiffusive}. For the special case when the symbol of $\mathbf{T}$ is {\it odd}, the authors in \cite{CCCGW12} showed the local existence and uniqueness of solutions for \eqref{general abstract active scalar eqn nondiffusive} in Sobolev spaces $H^s$, the main ingredients in their proof were an estimate for the commutator $\|[\partial_{x_i}(-\Delta)^\frac{s}{2},g]\|_{L^2}$ and the identity
\begin{align*}
\int_{\R^d}fAfg=-\frac{1}{2}\int_{\R^d}f[A,g]f,
\end{align*}
which holds for smooth $f$ and $g$, and $A$ is an odd operator. In contrast, when the symbol of $\mathbf{T}$ is {\it even} and singular of order 1, the above method fails and in particular for the case of non-diffusive MG equation described by \eqref{abstract active scalar eqn MG intro}, Friedlander and Vicol \cite{FV11b} established ill-posedness in the sense of Hadamard in Sobolev spaces. The proof of ill-posednes in \cite{FV11b} relied on the explicit structure of the symbol of $\mathbf{M}$ given by \eqref{MG Fourier symbol_1}-\eqref{MG Fourier symbol_3} in which there is one derivative loss in $x$ in the map $\theta\mapsto u$, with the anisotropy of the Fourier symbol of $\mathbf{M}$ and the fact that the symbol of $\mathbf{M}$ is even. In our current work, we point out that our well-posedness results in Gevrey classes hold for {\it any} abstract operator $\mathbf{T}$ satisfying \eqref{singular operator condition} and is {\it independent of} the evenness or oddness of $\mathbf{T}$.

In Section~\ref{illposed in Gevrey class general} under further conditions (C1)--(C6) on the abstract system \eqref{general abstract active scalar eqn nondiffusive}, we prove ill-posedness results in Gevrey classes. For even operators $\mathbf{T}$, ill-posedness results in Sobolev spaces have been proved using techniques of continued fractions \cite{FGSV12, FRV12, FV11b}, and such method was later adopted for odd operators as well \cite{KVW16}. In this current paper we extend such methods to Gevrey spaces. The main ingredients in the proof involve a linear ill-posedness result and a classical linear implies nonlinear ill-posedness argument. The conditions (C1)--(C6) described in Section~\ref{illposed in Gevrey class general} are somewhat more general than those given in \cite{FGSV12} as we discuss in Remark~\ref{discussion on the conditions C1 to C6}. Our conditions allow us to obtain sharper results for some physical models. To achieve our goal, we {\it combine the techniques} used for even operators \cite{FGSV12, FRV12, FV11b} as well as odd operators \cite{KVW16}, which enable us to obtain a fast enough growth rate of the $L^2$ norm of the solutions in any short period of time. 

In Section~\ref{Applications to physical models section} we apply the results for a general class of active scalar equations to the MG, IPM and SIPM equations. Our end point results are consistent with the literature and give a more complete picture of Gevrey space ill/well-posedness related to strongly singular operators. We point out that our ill-posedness results apply to the physical model of non-diffusive MG equation given by \eqref{abstract active scalar eqn MG intro}-\eqref{MG Fourier symbol_3}, which show that the non-diffusive MG equation is in fact locally ill-posed in Gevrey spaces $G^s$ {\it for all} $s>1$. Such result is drastically different from the case of analytic spaces (that is the case of $G^s$ with $s=1$) for which the authors in \cite{FV11b} proved that the non-diffusive MG equation is locally well-posed in analytic spaces. If the operator $\mathbf{M}$ in \eqref{abstract active scalar eqn MG intro} is replaced by $(-\Delta)^\frac{\alpha}{2}\mathbf{M}$, then we are able to obtain a {\it sharp dichotomy} across the value $\alpha=0$ that is the case of non-diffusive MG equation. More precisely, if $\alpha<0$ the equations are locally well-posed, while if $\alpha>0$ they are ill-posed in Gervey classes $G^s$ for all $s\ge1$. We refer to Remark~\ref{discussion on sharp dichotomy} for further discussion.  

\section{Preliminaries}\label{Preliminaries}

We introduce the following notations used in this paper. $W^{s,p}$ is the usual inhomogeneous Sobolev space with norm $\|\cdot\|_{W^{s,p}(\T^{d})}$, and we write $H^s=W^{s,2}$. For simplicity, we write $\|\cdot\|_{L^p}=\|\cdot\|_{L^p(\T^{d})}$, $\|\cdot\|_{W^{s,p}}=\|\cdot\|_{W^{s,p}(\T^{d})}$, etc. unless otherwise specified.  

We recall the following Sobolev embedding inequality from the literature (see for example Bahouri-Chemin-Danchin \cite{BCD11} and Ziemer \cite{Z89}): For $d\ge1$, suppose that $k>0$, $p<d$, $1\le p<q<\infty$ with $\frac{1}{p} - \frac{l}{d}=\frac{1}{q}$. If $g\in W^{p,l}$, then $g\in L^q$ and there exists a constant $C>0$ such that
\begin{align}\label{Sobolev inequality}
\|g\|_{L^q}\le C\|g\|_{W^{p,l}}.
\end{align}

We state the following definition for Gevrey class $\Gs$ when $s\ge1$.

\begin{defn}[Gevrey-space]
Fix $r>3$. A function $\theta\in C^{\infty}(\R^d)$ belongs to the Gevrey class $\Gs$ where $s\ge1$, if there exists $\tau>0$, known as the Gevrey-class radius, such that the $\Gs_\tau$-norm is finite, i.e.
\begin{align}\label{defn of Gevrey norm}
\|\theta\|^2_{\Gs_\tau}=\|(-\Delta)^\frac{r}{2} e^{\tau(-\Delta)^\frac{1}{2s}}\theta\|^2_{L^2}=\sum_{\bk\in\Z^{d}_{*}}|\bk|^{2r}e^{2\tau|\bk|^\frac{1}{s}}|\widehat{\theta}(\bk)|^2.
\end{align}
\end{defn}
\begin{rem}
Since we are working in the mean-free setting, we take $\Z^{d}_{*}=\{\bk\in\Z^{d}:|\bk|\neq0\}$ and we define $\Gs:=\cup_{\tau>0}\Gs_\tau$. We point out that for the case when $s=1$, $\Gs$ gives the space of analytic functions.
\end{rem}

We also recall the following definition of Lipschitz well-posedness for the nonlinear equation (see \cite[Definition 4.4]{FRV12} for example):

\begin{defn}\label{def of locally Lipschitz wellposedness}
Let $Y\subset X\subset H^{q}$ be Banach spaces with $q>r_{0}+\frac{d}{4}$. The Cauchy problem \eqref{general abstract active scalar eqn nondiffusive} is locally Lipschitz $(X, Y )$ well-posed, if there exist continuous functions $T, K:[0, \infty)^2\to(0, \infty)$, the time of existence and the Lipschitz constant, so that for every pair of initial data $\theta_0^{(1)}$, $\theta_0^{(2)}\in Y$, there exist unique solutions $\theta^{(1)}$, $\theta^{(2)}\in L^\infty(0,T;X)$ of the initial value problem associated with \eqref{general abstract active scalar eqn nondiffusive} that additionally satisfy
\begin{align}\label{condition for solution being Lipschitz wellposed}
\|\theta^{(1)}(\cdot,t)-\theta^{(2)}(\cdot,t)\|_{X}\le K\|\theta_0^{(1)}(\cdot)-\theta_0^{(2)}(\cdot)\|_{Y}
\end{align}
for every $t\in [0,T]$, where $T=T(\|\theta_0^{(1)}\|_{Y},\|\theta_0^{(2)}\|_{Y})$ and $K=K(\|\theta_0^{(1)}\|_{Y},\|\theta_0^{(2)}\|_{Y})$.
\end{defn}

Finally, we state the following fact which will become useful in Section~\ref{wellposed in Gevrey class general}.

\begin{prop}\label{useful algebraic ineq prop}
Let $\zeta>0$. Then for any $a,b>0$, we have
\begin{align}\label{useful algebraic ineq}
(a+b)^\zeta\le C_{\zeta}(a^\zeta + b^\zeta)
\end{align}
where $C_{\zeta}>0$ depends only on $\zeta$ and is independent of $a$ and $b$.
\end{prop}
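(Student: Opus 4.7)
The plan is to establish the inequality by reducing to the elementary bound $a+b \le 2\max(a,b)$. First I would raise this to the power $\zeta$, giving
\begin{align*}
(a+b)^\zeta \le 2^\zeta \max(a,b)^\zeta.
\end{align*}
Since $\max(a,b)^\zeta \le a^\zeta + b^\zeta$ for any $\zeta > 0$ (as both terms on the right are nonnegative and one of them equals $\max(a,b)^\zeta$), I obtain
\begin{align*}
(a+b)^\zeta \le 2^\zeta (a^\zeta + b^\zeta),
\end{align*}
so taking $C_\zeta = 2^\zeta$ suffices. This single chain covers all $\zeta > 0$ uniformly, with no need to split cases.

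Alternatively, one can give a slightly sharper constant by treating the ranges $\zeta \in (0,1]$ and $\zeta > 1$ separately: in the subadditive regime $\zeta \le 1$, concavity of $t \mapsto t^\zeta$ on $[0,\infty)$ yields $(a+b)^\zeta \le a^\zeta + b^\zeta$, so $C_\zeta = 1$ works; in the convex regime $\zeta > 1$, Jensen's inequality applied to $t \mapsto t^\zeta$ gives
\begin{align*}
\left(\frac{a+b}{2}\right)^\zeta \le \frac{a^\zeta + b^\zeta}{2},
\end{align*}
so $C_\zeta = 2^{\zeta-1}$ suffices. Either way $C_\zeta$ depends only on $\zeta$.

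There is no real obstacle here; the statement is a standard power-mean-type inequality recorded for convenient reference in Section~\ref{wellposed in Gevrey class general}. The only minor point worth being careful about is that the constant must be taken nondecreasing in $\zeta$ (for the intended use in bounding sums of Fourier-side quantities with $\zeta$ possibly large), which both of the above choices of $C_\zeta$ satisfy.
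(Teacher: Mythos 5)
Your proof is correct and complete. The paper itself states this proposition as a bare fact, with no proof supplied (``we state the following fact which will become useful in Section~\ref{wellposed in Gevrey class general}''), so there is no in-paper argument to compare against; both of your routes are standard and valid. That said, a comparison of your two variants against the paper's actual \emph{usage} is worthwhile: the crude chain $(a+b)^\zeta \le 2^\zeta \max(a,b)^\zeta \le 2^\zeta(a^\zeta+b^\zeta)$ proves exactly what is stated, but in Section~\ref{wellposed in Gevrey class general} the inequality is invoked with $\zeta=\tfrac{1}{s}\le 1$ and $\zeta=\tfrac{1}{2s}\le 1$ \emph{with implicit constant $1$} (e.g.\ $|\bl|^{\frac{1}{s}}\le|\bj|^{\frac{1}{s}}+|\bk|^{\frac{1}{s}}$ and $|\bj|^{\frac{1}{2s}}\le|\bk|^{\frac{1}{2s}}+|\bl|^{\frac{1}{2s}}$), which follows from your subadditivity/concavity case but not literally from the $2^\zeta$ bound; only the remaining use, $|\bl|^r\le C_r(|\bj|^r+|\bk|^r)$ with $r>3$, genuinely needs a constant. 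So your case-split version is the one that matches how the proposition is actually deployed. Your closing concern about taking $C_\zeta$ nondecreasing in $\zeta$ is unnecessary: every application in the paper fixes $\zeta$ (at $\tfrac{1}{s}$, $\tfrac{1}{2s}$, or $r$), so no uniformity in $\zeta$ is required.
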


\section{Wellposedness in Gevrey-class of active scalar equations}\label{wellposed in Gevrey class general}

In this section, we study the active scalar equation \eqref{general abstract active scalar eqn nondiffusive} when $r_0\in[0,1]$, where $r_0$ is given by \eqref{singular operator condition} in Section~\ref{introduction}. We prove that the equation \eqref{general abstract active scalar eqn nondiffusive} is locally well-posed in Gevrey class $\Gs$ for all $s\in[1,\frac{1}{r_{0}}]$ when $r_0\in(0,1]$, while for the case $r_0=0$, the equation \eqref{general abstract active scalar eqn nondiffusive} is locally well-posed in $\Gs$ for all $s\ge1$. The results are summarised in Thereom~\ref{local existence Gevrey thm general}:

\begin{thm}\label{local existence Gevrey thm general}
Let $r_{0}\in[0,1]$ and assume that \eqref{singular operator condition} holds. When $r_{0}\in(0,1]$, we fix $s\in[1,\frac{1}{r_{0}}]$ and $K_0>0$. Let $\theta_0\in \Gs$ with radius of convergence $\tau_0>0$ and
\begin{align}\label{bounds on gevrey norm of theta0}
\|(-\Delta)^\frac{r}{2}e^{\tau_0(-\Delta)^\frac{1}{2s}}\theta_0\|_{L^2}\le K_0,
\end{align}
where $r>3$. There exist a positive time $T_*=T_*(\tau_0, K_0)$ and a unique solution in $\Gs$ on $[0, T_*)$ to the initial value problem associated to \eqref{general abstract active scalar eqn nondiffusive}.

When $r_0=0$, there exist a positive time $T_*=T_*(\tau_0, K_0)$ and a unique solution in $\Gs$ on $[0, T_*)$ to the initial value problem associated to \eqref{general abstract active scalar eqn nondiffusive} for all $s\ge1$.
\end{thm}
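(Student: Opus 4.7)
The plan is an a priori Gevrey estimate with a time-decreasing Gevrey radius $\tau(t)$, combined with a Galerkin truncation to make the computation rigorous. Since $\mathbf{T}$, $\nabla$, and the Gevrey weight $A:=(-\Delta)^{r/2}e^{\tau(-\Delta)^{1/(2s)}}$ are all Fourier multipliers, they commute with any frequency truncation, so the a priori bound below applies uniformly to the Galerkin approximations $\theta^{(n)}$.

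Set
\begin{align*}
E(t):=\|\theta(\cdot,t)\|^2_{\Gs_{\tau(t)}},\qquad D(t):=\sum_{\bk\in\Z^{d}_{*}}|\bk|^{2r+1/s}e^{2\tau(t)|\bk|^{1/s}}|\widehat{\theta}(\bk,t)|^2.
\end{align*}
Differentiating $E$ along \eqref{general abstract active scalar eqn nondiffusive} yields $\tfrac12\dot E(t)=\dot\tau(t)D(t)-\langle A(u\cdot\nabla\theta),A\theta\rangle_{L^{2}}$. Because $\nabla\cdot u=0$, $\langle u\cdot\nabla(A\theta),A\theta\rangle_{L^{2}}=0$, so the nonlinear term reduces to a commutator $\langle[A,u\cdot\nabla]\theta,A\theta\rangle_{L^{2}}$, which in Fourier space is a triple sum over $\bj+\bk=\bl$ involving the symbol difference $a(\bl)-a(\bk)$ with $a(\bk)=|\bk|^{r}e^{\tau|\bk|^{1/s}}$.

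The trilinear estimate is handled in two regimes. In the low-high regime $|\bj|\le|\bk|/2$, a Taylor expansion gives $|a(\bl)-a(\bk)|\le C(1+\tau|\bk|^{1/s})|\bj||\bk|^{r-1}e^{\tau|\bk|^{1/s}}$, and the $\tau|\bk|^{1/s}$ factor provides exactly the $D(t)^{1/2}$-strength weight $|\bk|^{r+1/(2s)}e^{\tau|\bk|^{1/s}}$ on one copy of $\widehat{\theta}(\bk)$ after Cauchy-Schwarz, while the low-frequency factor, carrying weight $|\bj|^{r_{0}+1}$, is absorbed into $E(t)^{1/2}$ by the Sobolev embedding \eqref{Sobolev inequality} under the condition $r>r_{0}+1+d/2$ (guaranteed by $r>3$, $d\ge2$, $r_{0}\in[0,1]$). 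In the high-high regime $|\bj|>|\bk|/2$ one uses the crude bound $|a(\bl)-a(\bk)|\le a(\bl)+a(\bk)$, the factorisation $e^{\tau|\bl|^{1/s}}\le e^{\tau|\bj|^{1/s}}e^{\tau|\bk|^{1/s}}$ (valid because $s\ge 1$ gives $|\bl|^{1/s}\le|\bj|^{1/s}+|\bk|^{1/s}$ by Proposition~\ref{useful algebraic ineq prop}), and $|\widehat{\mathbf T}(\bj)|\le C|\bj|^{r_{0}}$ to obtain a comparable bound. Altogether $|\langle A(u\cdot\nabla\theta),A\theta\rangle_{L^{2}}|\le CE(t)D(t)^{1/2}$.

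The estimate closes by choosing $\dot\tau(t)=-\beta\sqrt{E(t)}$ with $\beta$ large: Young's inequality gives $CED^{1/2}\le\tfrac{\beta}{2}\sqrt{E}\,D+\tfrac{C^{2}}{2\beta}E^{3/2}$, whose first summand is absorbed by $-\dot\tau(t)D(t)=\beta\sqrt{E(t)}D(t)$, leaving $\dot E(t)\le CE(t)^{3/2}$. This ODE inequality yields a uniform bound on a time interval $[0,T_{*})$ with $T_{*}=T_{*}(\tau_{0},K_{0})>0$, and since $|\dot\tau|\lesssim\sqrt{K_{0}}$, the radius $\tau$ remains strictly positive after shrinking $T_{*}$ if necessary. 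Passage to the limit $n\to\infty$ gives a Gevrey solution, and a standard $L^{2}$ Gronwall on the difference of two solutions, exploiting $\|\mathbf T\theta\|_{L^{\infty}}\lesssim\|\theta\|_{\Gs_{\tau}}$ through \eqref{Sobolev inequality}, yields uniqueness. The case $r_{0}=0$ is identical with the constraint $s\le 1/r_{0}$ vacuous. The principal obstacle is the trilinear estimate in the low-high regime, where the $r_{0}+1$ derivative loss from the nonlinearity must split into the $1/(2s)$ extra provided by $D(t)^{1/2}$ and the $r_{0}$ absorbed via Sobolev embedding on the low-frequency factor; this balance is precisely what produces the threshold $s\le 1/r_{0}$ appearing in the statement for $r_{0}\in(0,1]$.
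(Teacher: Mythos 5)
Your scheme is the same as the paper's: a Gevrey energy estimate with shrinking radius $\tau(t)$, the divergence-free cancellation reducing the nonlinearity to a commutator, a two-regime Fourier splitting with mean-value bounds on the symbol differences $|\bl|^{r}-|\bk|^{r}$ and $e^{\tau|\bl|^{1/s}}-e^{\tau|\bk|^{1/s}}$, and absorption of the top-order term by $\dot\tau$ (the paper takes the constant rate $\dot\tau=-2CK_0$ where you take $\dot\tau=-\beta\sqrt{E}$; both close). However, your key trilinear bound $|\langle A(u\cdot\nabla\theta),A\theta\rangle|\le CE(t)D(t)^{1/2}$ is false as stated. The mean value theorem on the exponential gives an extra factor $\tau|\bj|\,|\bk|^{\frac1s-1}$, which combined with the factor $|\bk|$ from $\nabla$ leaves a net extra $\tau|\bk|^{1/s}$ on the high side; after splitting $|\bk|^{1/s}\lesssim|\bk|^{1/(2s)}|\bl|^{1/(2s)}$ (legitimate since $|\bk|\le 2|\bl|$ in your low-high regime), \emph{both} high-frequency copies $\widehat{\theta}(\bk)$, $\widehat{\theta}(\bl)$ are upgraded to $D^{1/2}$-strength weight $|\cdot|^{r+1/(2s)}e^{\tau|\cdot|^{1/s}}$, not one of them. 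Cauchy--Schwarz then yields $C(1+\tau)E^{1/2}D$ (plus an $E^{3/2}\le E^{1/2}D$ term), which is exactly the paper's \eqref{intermediate estimate on R1}--\eqref{intermediate estimate on R2} and its bound \eqref{a priori bound on theta singular gevrey}. The stronger bound $ED^{1/2}$ cannot hold: take $\widehat{\theta}$ supported on one $O(1)$ low mode together with a high pair at frequency $\sim N$ of amplitude $e^{-2\tau N^{1/s}}$; then the commutator term is of size $\tau E^{1/2}D$ while $ED^{1/2}$ is smaller by an unbounded factor. Fortunately this error is local: with the corrected bound your choice $\dot\tau=-\beta\sqrt{E}$, $\beta\ge C(1+\tau_0)$, absorbs the entire $D$ term directly (no Young step needed), giving $\dot E\le CE^{3/2}$ and in fact $\dot E\le 0$, so your time of existence $T_*(\tau_0,K_0)$ survives.

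Two further points. First, your uniqueness argument via a plain $L^2$ Gronwall does not close when $r_0>0$: for the difference $\delta=\theta^{(1)}-\theta^{(2)}$ the problematic term is $\langle(\mathbf{T}\delta)\cdot\nabla\theta^{(2)},\delta\rangle$, which requires $\|\mathbf{T}\delta\|_{L^2}\lesssim\|\delta\|_{H^{r_0}}$ by \eqref{singular operator condition}; this is not controlled by $\|\delta\|_{L^2}$, and your bound $\|\mathbf{T}\theta\|_{L^\infty}\lesssim\|\theta\|_{\Gs_\tau}$ only handles the transport term $u^{(1)}\cdot\nabla\delta$, which vanishes anyway. The difference estimate must be run in the Gevrey scale itself (with a common shrinking radius), where the loss of $r_0\le\frac1s\le 1$ derivatives is absorbed exactly as in the a priori estimate; this is what underlies the paper's Picard iteration in $\Gs_{\tau(t)}$. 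Second, a minor quantitative slip shared in spirit with the paper: your summability condition $r>r_0+1+\frac{d}{2}$ for the low-frequency factor is \emph{not} implied by $r>3$ for all $d\ge2$ (it fails already for $r_0=1$, $d\ge 3$ at the margin), so strictly one needs $r$ large depending on $d$, which is harmless for the paper's applications with $d=2,3$.
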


\begin{proof}[Proof of Theorem~\ref{local existence Gevrey thm general}]
We only give the proof for $r_0\in(0,1]$, since the proof for the case $r_0=0$ is almost identical. 

Let $\tau=\tau(t)>0$ denotes the radius of convergence and we fix $r>3$, $r_0\in(0,1]$ and $s\in[1,\frac{1}{r_{0}}]$. Throughout this proof, $C$ always denotes a positive generic constant which may depend on $r$, $r_0$, $s$ but is independent of $\tau$, $\bl$, $\bj$ and $\bk$, and for simplicity we write $\Lambda=(-\Delta)^\frac{1}{2}$.

We take $L^2$-inner product of \eqref{general abstract active scalar eqn nondiffusive} with $\Lambda^{2r}e^{2\tau\Lambda^{\frac{1}{s}}}\theta$ and obtain
\begin{align}\label{a priori estimates on theta singular gevrey}
\frac{1}{2}\frac{d}{dt}\|\theta\|^2_{\Gs_\tau}-\dot{\tau}\|\Lambda^{\frac{1}{2s}}\theta\|^2_{\Gs_\tau}=\langle u\cdot\nabla\theta,\Lambda^{2r}e^{2\tau\Lambda^{\frac{1}{s}}}\theta\rangle.
\end{align} 
Write $\mathcal{R}=\langle u\cdot\nabla\theta,\Lambda^{2r}e^{2\tau\Lambda^{\frac{1}{s}}}\theta\rangle$. Since $\nabla\cdot u=0$, we have 
\begin{align*}
\langle u\cdot\nabla \Lambda^r e^{\tau\Lambda^{\frac{1}{s}}}\theta,\Lambda^r e^{\tau\Lambda^{\frac{1}{s}}}\theta\rangle=0
\end{align*}
and hence
\begin{align*}
\mathcal{R}=\langle u\cdot\nabla\theta,\Lambda^{2r}e^{2\tau\Lambda^{\frac{1}{s}}}\theta\rangle-\langle u\cdot\nabla \Lambda^r e^{\tau\Lambda^{\frac{1}{s}}}\theta,\Lambda^r e^{\tau\Lambda^{\frac{1}{s}}}\theta\rangle.
\end{align*}
By Plancherel’s theorem, $\mathcal{R}$ can rewritten as
\begin{align*}
\mathcal{R}&=i(2\pi)^{d}\summ\widehat{u}(\bj)\cdot \bk\widehat{\theta}(\bk)|\bl|^{2r}e^{2\tau|\bl|^\frac{1}{s}}\widehat{\theta}(-\bl)\\
&\qquad-i(2\pi)^{d}\summ\widehat{u}(\bj)\cdot\bk|\bk|^re^{\tau|\bk|^\frac{1}{s}}\widehat{\theta}(\bk)|\bl|^re^{\tau|\bl|^\frac{1}{s}}\widehat{\theta}(-\bl).
\end{align*}
Define $\A$ and $\cB$ by
\begin{align*}
\left\{ \begin{array}{l}
\A:=\{(\bj,\bk,\bl):\bj+\bk=\bl; |\bk|\le|\bj|;\bj,\bk,\bl\in\Z^{d}_*\} \\
\cB:=\{(\bj,\bk,\bl):\bj+\bk=\bl; |\bj|\le|\bk|;\bj,\bk,\bl\in\Z^{d}_*\},
\end{array}\right.
\end{align*}
then $\mathcal{R}$ can be bounded by
\begin{align*}
|\mathcal{R}|\le|\mathcal{R}_1| + |\mathcal{R}_2|,
\end{align*}
where
\begin{align*}
\mathcal{R}_1&=i(2\pi)^{d}\suma\widehat{u}(\bj)\cdot \bk\widehat{\theta}(\bk)|\bl|^{2r}e^{2\tau|\bl|^\frac{1}{s}}\widehat{\theta}(-\bl)\\
&\qquad-i(2\pi)^{d}\suma\widehat{u}(\bj)\cdot\bk|\bk|^re^{\tau|\bk|^\frac{1}{s}}\widehat{\theta}(\bk)|\bl|^re^{\tau|\bl|^\frac{1}{s}}\widehat{\theta}(-\bl)
\end{align*}
and
\begin{align*}
\mathcal{R}_2&=i(2\pi)^{d}\sumb\widehat{u}(\bj)\cdot \bk\widehat{\theta}(\bk)|\bl|^{2r}e^{2\tau|\bl|^\frac{1}{s}}\widehat{\theta}(-\bl)\\
&\qquad-i(2\pi)^{d}\sumb\widehat{u}(\bj)\cdot\bk|\bk|^re^{\tau|\bk|^\frac{1}{s}}\widehat{\theta}(\bk)|\bl|^re^{\tau|\bl|^\frac{1}{s}}\widehat{\theta}(-\bl).
\end{align*}
We estimate $\mathcal{R}_1$ and $\mathcal{R}_2$ as follows.

\noindent{\bf Estimates on $\mathcal{R}_1$:} With the help of inequality \eqref{useful algebraic ineq}, we have
\begin{align*}
|\bl|^\frac{1}{s}\le|\bj|^\frac{1}{s}+|\bk|^\frac{1}{s}
\end{align*}
and 
\begin{align*}
|\bl|^r\le C_{r}(|\bj|^r+|\bk|^r)
\end{align*}
for some $C_{r}$ independent of $\bl$, $\bj$ and $\bk$. Therefore we obtain
\begin{align}\label{estimate on R 1}
\mathcal{R}_1&\le C\suma|\bj|^{r_{0}}|\bk|(|\bj|^r+|\bk|^r)|\widehat{\theta}(\bj)|e^{\tau|\bj|^\frac{1}{s}}|\widehat{\theta}(\bk)|e^{\tau|\bk|^\frac{1}{s}}|\bl|^r|\widehat{\theta}(\bl)|e^{\tau|\bl|^\frac{1}{s}}\notag\\
&\qquad+C\suma|\bj|^{r_{0}}|\widehat{\theta}(\bj)||\bk|^{r+1}e^{\tau|\bk|^\frac{1}{s}}|\widehat{\theta}(\bk)||\bl|^re^{\tau|\bl|^\frac{1}{s}}|\widehat{\theta}(\bl)|\notag\\
&\le C\suma|\bj|^{r+r_{0}}|\bk||\bl|^r|\widehat{\theta}(\bj)|e^{\tau|\bj|^\frac{1}{s}}|\widehat{\theta}(\bk)|e^{\tau|\bk|^\frac{1}{s}}|\widehat{\theta}(\bl)|e^{\tau|\bl|^\frac{1}{s}}\notag\\
&\qquad+C\suma|\bj|^{r_{0}}|\bk|^{r+1}|\bl|^r|\widehat{\theta}(\bj)|e^{\tau|\bj|^\frac{1}{s}}|\widehat{\theta}(\bk)|e^{\tau|\bk|^\frac{1}{s}}|\widehat{\theta}(\bl)|e^{\tau|\bl|^\frac{1}{s}}\notag\\
&\qquad+C\suma|\bj|^{r_{0}}|\widehat{\theta}(\bj)||\bk|^{r+1}e^{\tau|\bk|^\frac{1}{s}}|\widehat{\theta}(\bk)||\bl|^re^{\tau|\bl|^\frac{1}{s}}|\widehat{\theta}(\bl)|.
\end{align}
Notice that since $r_{0}\le\frac{1}{s}$ and $|\bj|\ge1$, we have 
\begin{align*}
|\bj|^{r+r_{0}}\le|\bj|^{r+\frac{1}{s}}. 
\end{align*}
Moreover, using inequality \eqref{useful algebraic ineq}, it implies $|\bj|^\frac{1}{2s}\le|\bk|^\frac{1}{2s}+|\bl|^\frac{1}{2s}$. Hence we obtain
\begin{align*}
&C\suma|\bj|^{r+r_{0}}|\bk||\bl|^r|\widehat{\theta}(\bj)|e^{\tau|\bj|^\frac{1}{s}}|\widehat{\theta}(\bk)|e^{\tau|\bk|^\frac{1}{s}}|\widehat{\theta}(\bl)|e^{\tau|\bl|^\frac{1}{s}}\\
&\le C\suma|\bj|^{r+\frac{1}{2s}}|\bk|(|\bk|^\frac{1}{2s}+|\bl|^\frac{1}{2s})|\bl|^r|\widehat{\theta}(\bj)|e^{\tau|\bj|^\frac{1}{s}}|\widehat{\theta}(\bk)|e^{\tau|\bk|^\frac{1}{s}}|\widehat{\theta}(\bl)|e^{\tau|\bl|^\frac{1}{s}}\\
&\le C\suma|\bj|^{r+\frac{1}{2s}}|\bk|^{1+\frac{1}{2s}}|\bl|^{r+\frac{1}{2s}}|\widehat{\theta}(\bj)|e^{\tau|\bj|^\frac{1}{s}}|\widehat{\theta}(\bk)|e^{\tau|\bk|^\frac{1}{s}}|\widehat{\theta}(\bl)|e^{\tau|\bl|^\frac{1}{s}},
\end{align*}
where the last inequality follows since $|\bk|,|\bl|\ge1$. Similarly, since $|\bk|\le|\bj|$, we also have
\begin{align*}
&C\suma|\bj|^{r_{0}}|\bk|^{r+1}|\bl|^r|\widehat{\theta}(\bj)|e^{\tau|\bj|^\frac{1}{s}}|\widehat{\theta}(\bk)|e^{\tau|\bk|^\frac{1}{s}}|\widehat{\theta}(\bl)|e^{\tau|\bl|^\frac{1}{s}}\\
&\le C\suma|\bj|^{r+\frac{1}{2s}}|\bk|^{1+\frac{1}{2s}}|\bl|^{r+\frac{1}{2s}}|\widehat{\theta}(\bj)|e^{\tau|\bj|^\frac{1}{s}}|\widehat{\theta}(\bk)|e^{\tau|\bk|^\frac{1}{s}}|\widehat{\theta}(\bl)|e^{\tau|\bl|^\frac{1}{s}},
\end{align*}
and
\begin{align*}
&C\suma|\bj|^{r_{0}}|\widehat{\theta}(\bj)||\bk|^{r+1}e^{\tau|\bk|^\frac{1}{s}}|\widehat{\theta}(\bk)||\bl|^re^{\tau|\bl|^\frac{1}{s}}|\widehat{\theta}(\bl)|\\
&\le C\suma|\bj|^{r_{0}+1}|\widehat{\theta}(\bj)||\bk|^{r}e^{\tau|\bk|^\frac{1}{s}}|\widehat{\theta}(\bk)||\bl|^re^{\tau|\bl|^\frac{1}{s}}|\widehat{\theta}(\bl)|.
\end{align*}
Hence we obtain from \eqref{estimate on R 1} that
\begin{align*}
&\mathcal{R}_1\le C\suma|\bj|^{r+\frac{1}{2s}}|\bk|^{1+\frac{1}{2s}}|\bl|^{r+\frac{1}{2s}}|\widehat{\theta}(\bj)|e^{\tau|\bj|^\frac{1}{s}}|\widehat{\theta}(\bk)|e^{\tau|\bk|^\frac{1}{s}}|\widehat{\theta}(\bl)|e^{\tau|\bl|^\frac{1}{s}}\notag\\
&\qquad+C\suma|\bj|^{r_{0}+1}|\widehat{\theta}(\bj)||\bk|^{r}e^{\tau|\bk|^\frac{1}{s}}|\widehat{\theta}(\bk)||\bl|^re^{\tau|\bl|^\frac{1}{s}}|\widehat{\theta}(\bl)|.
\end{align*}
Following the argument given in \cite{FV11b}, we further obtain that 
\begin{align*}
\mathcal{R}_1\le C\|\Lambda^\frac{1}{2s}\theta\|^2_{\Gs_\tau}\sum_{\bj\in\Z_*^d}|\bj|^{1+\frac{1}{2s}}|\widehat{\theta}(\bj)|e^{\tau|\bj|^\frac{1}{s}}+C\|\theta\|^2_{\Gs_\tau}\sum_{\bj\in\Z_*^d}|\bj|^{r_{0}+1}|\widehat{\theta}(\bj)|e^{\tau|\bj|^\frac{1}{s}}.
\end{align*}
By using Cauchy-Schwartz inequality, we have 
\begin{align*}
\sum_{\bj\in\Z_*^d}|\bj|^{1+\frac{1}{2s}}|\widehat{\theta}(\bj)|e^{\tau|\bj|^\frac{1}{s}}\le \Big(\sum_{\bj\in\Z_*^d}|\bj|^{2+\frac{1}{s}-2r}\Big)^\frac{1}{2}\Big(\sum_{\bj\in\Z_*^d}|\bj|^{2r}|\widehat{\theta}(\bj)|e^{2\tau|\bj|^\frac{1}{s}}\Big)^\frac{1}{2}\le C\|\theta\|_{\Gs_\tau},
\end{align*}
where the last inequality holds since $s\ge1$ and $r>3$. Similarly,
\begin{align*}
\sum_{\bj\in\Z_*^d}|\bj|^{r_{0}+1}|\widehat{\theta}(\bj)|e^{\tau|\bj|^\frac{1}{s}}\le C\|\theta\|_{\Gs_\tau},
\end{align*}
and we conclude that
\begin{align}\label{intermediate estimate on R1}
\mathcal{R}_1\le C\|\Lambda^\frac{1}{2s}\theta\|^2_{\Gs_\tau}\|\theta\|_{\Gs_\tau}.
\end{align}
\noindent{\bf Estimates on $\mathcal{R}_2$:} We first bound $\mathcal{R}_2$ by
\begin{align*}
|\mathcal{R}_2|\le \mathcal{T}_1+\mathcal{T}_2
\end{align*}
where $\mathcal{T}_1$ and $\mathcal{T}_2$ are given by
\begin{align*}
\mathcal{T}_1&:=C\sumb||\bl|^r-|\bk|^r||\widehat{u}(\bj)||\widehat{\theta}(\bk)||\bk||\bl|^r|\widehat{\theta}(\bl)|e^{\tau|\bl|^\frac{1}{s}}e^{\tau|\bk|^\frac{1}{s}},\\
\mathcal{T}_2&:=C\sumb|\bl|^r|e^{\tau|\bl|^\frac{1}{s}-\tau|\bk|^\frac{1}{s}}-1||\widehat{u}(\bj)||\widehat{\theta}(\bk)||\bk||\bl|^re^{\tau|l|^\frac{1}{s}}|\widehat{\theta}(\bl)|e^{\tau|\bk|^\frac{1}{s}}.
\end{align*}
To estimate $\mathcal{T}_1$ and $\mathcal{T}_2$, we apply the similar method given in \cite{KV09, MV11, FS21b}. Using mean value theorem, there exists $\xi_{\bk,\bl}\in(0,1)$ such that
\begin{align*}
|\bl|^r-|\bk|^r=((\xi_{\bk,\bl}|l|+(1-\xi_{\bk,\bl})|\bk|)^{r-1}-|\bk|^{r-1})+(|\bl|-|\bk|)|\bk|^{r-1}.
\end{align*}
Since $\bj+\bk=\bl$, we have $|(|\bl|-|\bk|)|\bk|^{r-1}|\le |\bj||\bk|^{r-1}$ as well as
\begin{align*}
|(\xi_{\bk,\bl}|l|+(1-\xi_{\bk,\bl})|\bk|)^{r-1}-|\bk|^{r-1}|\le C|\bj|^2(|\bj|^{r-2}+|\bk|^{r-2}).
\end{align*}
Together with the \eqref{singular operator condition}, we have
\begin{align*}
\mathcal{T}_1&\le C\sumb|\bj|^{\alpha+3}(|\bj|^{r-2}+|\bk|^{r-2})|\widehat{\theta}(\bj)||\widehat{\theta}(\bk)||\bk||\bl|^r|\widehat{\theta}(\bl)|e^{\tau|\bl|^\frac{1}{s}}e^{\tau|\bk|^\frac{1}{s}}\\
&\qquad+C\sumb|\bj|^{r_{0}+1}|\bk|^{r-1}|\widehat{\theta}(\bj)||\widehat{\theta}(\bk)||\bk||\bl|^r|\widehat{\theta}(\bl)|e^{\tau|\bl|^\frac{1}{s}}e^{\tau|\bk|^\frac{1}{s}}\\
&\le C\|\theta\|^2_{\Gs_\tau}\|\Lambda^r\theta\|_{L^2},
\end{align*}
where the last inequality holds for $\alpha\le0$ and $r>3$. To estimate $\mathcal{T}_2$, using \eqref{singular operator condition} and the inequalities
\begin{align*}
|e^{\tau|\bl|^\frac{1}{s}-\tau|\bk|^\frac{1}{s}}-1|\le |\tau|\bl|^\frac{1}{s}-\tau|\bk|^\frac{1}{s}|e^{|\tau|\bl|^\frac{1}{s}-\tau|\bk|^\frac{1}{s}|}
\end{align*}
and
\begin{align*}
|\tau|\bl|^\frac{1}{s}-\tau|\bk|^\frac{1}{s}|\le\tau|\bj|^\frac{1}{s},
\end{align*}
we obtain
\begin{align*}
\mathcal{T}_2
&\le C\tau\sumb|\bl|^{2r}|\bk||\bj|^{r_{0}}||\bl|^\frac{1}{s}-|\bk|^\frac{1}{s}|e^{\tau|\bj|^\frac{1}{s}}|\widehat{\theta}(\bj)||\widehat{\theta}(\bk)||\widehat{\theta}(\bl)|e^{\tau|\bl|^\frac{1}{s}}e^{\tau|\bk|^\frac{1}{s}},
\end{align*}
Since $|\bj|\le|\bk|$ and $\bl=\bk+\bj$, we have $|\bl|\le 2|\bk|$. And for $s\ge1$, using the inequality 
\begin{align*}
||\bl|^\frac{1}{s}-|\bk|^\frac{1}{s}|\le\frac{C|\bj|}{|\bl|^{1-\frac{1}{s}}+|\bk|^{1-\frac{1}{s}}},
\end{align*}
we obtain
\begin{align*}
&|\bl|^{2r}|\bk||\bj|^{r_{0}}||\bl|^\frac{1}{s}-|\bk|^\frac{1}{s}|e^{\tau|\bj|^\frac{1}{s}}|\widehat{\theta}(\bj)||\widehat{\theta}(\bk)||\widehat{\theta}(\bl)|e^{\tau|\bl|^\frac{1}{s}}e^{\tau|\bk|^\frac{1}{s}}\\
&\le C|\bk|^{r+1-\frac{1}{2s}} e^{\tau|\bk|^\frac{1}{s}} \Big(\frac{|\bj|}{|\bl|^{1-\frac{1}{s}}+|\bk|^{1-\frac{1}{s}}}\Big)|\bj|^{r_{0}}|\widehat{\theta}(\bj)||\widehat{\theta}(\bk)||\widehat{\theta}(\bl)||\bl|^{r+\frac{1}{2s}}e^{\tau|\bl|^\frac{1}{s}}e^{\tau|\bk|^\frac{1}{s}}\\
&\le C\Big(|\bj|^{r_{0}+1}e^{\tau|\bj|^\frac{1}{s}}|\widehat{\theta}(\bj)|\Big)\Big(|\bk|^{r+\frac{1}{2s}}e^{\tau|\bk|^\frac{1}{s}}|\widehat{\theta}(\bk)|\Big)\Big(|\bl|^{r+\frac{1}{2s}}e^{\tau|\bl|^\frac{1}{s}}|\widehat{\theta}(\bl)|\Big).
\end{align*}
Hence we have
\begin{align*}
\mathcal{T}_2\le C\tau\|\Lambda^{\frac{1}{2s}}\theta\|^2_{\Gs_\tau}\|\theta\|_{\Gs_\tau},
\end{align*}
and we conclude that
\begin{align}\label{intermediate estimate on R2}
\mathcal{R}_2\le C\|\theta\|^2_{\Gs_\tau}\|\Lambda^r\theta\|_{L^2}+C\tau\|\Lambda^{\frac{1}{2s}}\theta\|^2_{\Gs_\tau}\|\theta\|_{\Gs_\tau}.
\end{align}
Combining the estimates \eqref{a priori estimates on theta singular gevrey}, \eqref{intermediate estimate on R1} and \eqref{intermediate estimate on R2}, for $\tau<1$, we obtain the {\it a priori} bound
\begin{align}\label{a priori bound on theta singular gevrey}
\frac{1}{2}\frac{d}{dt}\|\theta(\cdot,t)\|^2_{\tau(t)}\le (\dot{\tau}(t)+C\|\theta(\cdot,t)\|_{\tau(t)})\|\Lambda^\frac{1}{2}\theta(\cdot,t)\|^2_{\tau(t)}.
\end{align}
Let $\tau=\tau(t)$ be deceasing and satisfy 
\begin{align*}
\dot{\tau}+2CK_0=0,
\end{align*}
with initial condition $\tau(0)=\tau_0$, then we have $\dot{\tau}+C\|\theta(\cdot,t)\|_{\Gs_\tau}<0$, and from \eqref{a priori bound on theta singular gevrey} that 
\begin{align}\label{bound on solution nu=0}
\|\theta(\cdot,t)\|_{\Gs_\tau}\le\|\theta(\cdot,0)\|_{\Gs_\tau}\le K_0
\end{align}
as long as $\tau>0$. Hence by the standard Picard iteration argument, it implies the existence of a solution $\theta\in\Gs$ on $[0,T_*)$, where the maximal time of existence of solution in $\Gs$ is given by $T_*=\frac{\tau_0}{2CK_0}$.
\end{proof}

\section{Illposedness in Gevrey-class of active scalar equations}\label{illposed in Gevrey class general}

In this section, we address the equation \eqref{general abstract active scalar eqn nondiffusive} when $r_0\in[1,2]$. For the Fourier symbol $\widehat{\mathbf{T}}$, we further assume the following conditions holds:
\begin{itemize}
\item[(C1)] $\widehat{\mathbf{T}}(\0', a) = 0$ for a given positive integer $a$, that is, $\sin(ax_d)$ is a steady state solution of \eqref{general abstract active scalar eqn nondiffusive} with corresponding velocity $u = 0$,
\item[(C2)] $\widehat{\mathbf{T}}_{d}(\bk)$ is a real positive rational function and is even in $\bk$.
\end{itemize}
Moreover, there exists a sequence $\{\bbj\}_{j\in\N}$ with $\bbj=(b_1^{(j)},\cdots,b_{d-1}^{(j)})\in\mathbb{Z}^{d-1}$ and $\dis\lim_{j\to\infty}|\bbj|=\infty$ such that
\begin{itemize}
\item[(C3)] $\widehat{\mathbf{T}}_{d}(\bbj,na)\to0$ as $n\to\infty$ for any fixed $j\in\N$,
\item[(C4)] $\widehat{\mathbf{T}}_{d}(\bbj,(n+1)a)<\widehat{\mathbf{T}}_{d}(\bbj,na)$ for all $n\in\mathbb{N}$ and any fixed $j\in\N$,
\item[(C5)] $\widehat{\mathbf{T}}_{d}(\bbj,na)\le \tilde C_1|\bbj|^{\beta_{1}}n^{\beta_{2}}$ for all $j$, $n\in\N$,
\item[(C6)] $\widehat{\mathbf{T}}_{d}(\bbj,a)\widehat{\mathbf{T}}_{d}(\bbj,2a)\ge\tilde C_2|\bbj|^{2\beta_{3}}$ for all $j\in\N$,
\end{itemize}
where $\beta_{1}$, $\beta_{2}$, $\beta_{3}\in\R$ satisfy
\begin{align}
\label{conditions on betas} 
\left\{ \begin{array}{l}
0<\beta_{3}\le\beta_{1}, \\
\beta_{1}+\beta_{2}\le r_{0}, \\
-2\le\beta_{2}< 0,
\end{array}\right.
\end{align}
and $\tilde C_1$, $\tilde C_2>0$ are constants which are independent of $j$ and $n$. 

\begin{rem}\label{discussion on the conditions C1 to C6}
Conditions (C1)--(C6) are reminiscent of those given in \cite[Section D]{FGSV12}. Examples of physical models satisfying conditions (C1)--(C6) include the magneto-geostrophic (MG) equation and the singular incompressible porous media (SIPM) equation; refer to Section~\ref{Applications to physical models section} for more detailed discussions. We point out that conditions (C3)--(C6) are required to hold {\it only for} some sequence $\{\bbj\}_{j\in\N}$ with $\dis\lim_{j\to\infty}|\bbj|=\infty$, which are somewhat more general than those given in \cite[Section D]{FGSV12}. 
\end{rem}

Under the conditions (C1)--(C6), we claim that solution to \eqref{general abstract active scalar eqn nondiffusive} is not well-posed in a class of Gevrey spaces. 

\begin{thm}\label{local ill-posed Gevrey thm general}
Let $r_{0}\in[1,2]$ and assume that \eqref{singular operator condition} holds. Fix $q>r_{0}+\frac{d}{4}$ and $s>\frac{\beta_{3}-\beta_{1}}{\beta_{3}\beta_{2}}$. Under the conditions {\rm(C1)--(C6)}, the equation \eqref{general abstract active scalar eqn nondiffusive} is locally Lipschitz $(H^q , \Gs_{\tau})$ ill-posed for $\tau > 0$.
\end{thm}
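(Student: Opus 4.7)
The plan is to extend the linear-implies-nonlinear ill-posedness strategy of \cite{FV11b,FRV12,FGSV12,KVW16} from Sobolev to Gevrey classes, where the novelty lies in the sharp balance between the exponential growth of the linearized operator and the exponential Gevrey weight that must be overcome. I would begin by linearizing \eqref{general abstract active scalar eqn nondiffusive} around the steady state $\bar\theta(x)=\sin(ax_d)$, which is admissible by (C1) since $\mathbf T\bar\theta=0$. For a perturbation $\phi$ the linearized equation reads
\[
\partial_t\phi=-a\cos(ax_d)\,(\mathbf T\phi)_d.
\]
Fixing a single horizontal wavenumber $\bbj$ from (C3)--(C6), I would look for solutions of the form $\phi(x,t)=\sum_{n\in\Z}c_n(t)e^{i(\bbj\cdot x'+nax_d)}$, which reduces the problem to a tridiagonal Jacobi-type ODE system with off-diagonal entries built from $\widehat{\mathbf T}_d(\bbj,na)$; condition (C2) (realness and evenness) makes the recursion quasi-symmetric, so that real unstable eigenvalues can be sought.

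The next step is to extract an unstable eigenvalue $\sigma_j>0$ with eigenvector $\phi_j$ via a Meshalkin--Sinai continued-fraction analysis applied to this tridiagonal system, in the spirit of \cite{FV11b,KVW16}. Conditions (C3)--(C4) guarantee that the truncated continued fraction converges monotonically as the truncation level tends to infinity, (C6) delivers the lower bound $\sigma_j\gtrsim a\sqrt{\widehat{\mathbf T}_d(\bbj,a)\widehat{\mathbf T}_d(\bbj,2a)}\gtrsim|\bbj|^{\beta_3}$, and (C5) pins down the decay of the eigenvector components as $|\psi^{(j)}_{n}/\psi^{(j)}_{n-1}|\lesssim|\bbj|^{\beta_1-\beta_3}\,n^{\beta_2}$, so that iteratively $|\psi^{(j)}_{n}|\lesssim C^{\,n-1}|\bbj|^{(\beta_1-\beta_3)(n-1)}(n!)^{\beta_2}|\psi^{(j)}_1|$. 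I expect this quantitative two-sided control of the continued fraction to be the main technical obstacle, since the hypotheses $\beta_1\ge\beta_3$ and $\beta_1+\beta_2\le r_0$ from \eqref{conditions on betas} must be combined simultaneously with \eqref{singular operator condition} to close the estimates at every truncation level $n$ and to combine the even-operator techniques of \cite{FGSV12,FRV12,FV11b} with the odd-operator techniques of \cite{KVW16}.

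With $\phi_j$ in hand, I would compute its Gevrey norm
\[
\|\phi_j\|_{\Gs_\tau}^{2}\sim\sum_{n\ge1}|\psi^{(j)}_n|^{2}\,(na)^{2r}\,e^{2\tau(na)^{1/s}},
\]
and optimize in $n$. Stirling gives $(n!)^{2\beta_2}\lesssim n^{2\beta_2 n}$ up to exponential corrections, so the dominant term sits at $n_*\sim|\bbj|^{(\beta_1-\beta_3)/(-\beta_2)}$; inserting $n_*$ into the Gevrey weight yields, modulo polynomial losses,
\[
\|\phi_j\|_{\Gs_\tau}\le\exp\!\bigl(C\tau\,|\bbj|^{(\beta_1-\beta_3)/(-s\beta_2)}\bigr).
\]
Comparing with the linear growth rate $e^{\sigma_j t}\sim e^{c|\bbj|^{\beta_3}t}$, the instability overwhelms the Gevrey weight on an arbitrarily short time window provided $\beta_3>(\beta_1-\beta_3)/(-s\beta_2)$, i.e.\ exactly when $s>(\beta_3-\beta_1)/(\beta_3\beta_2)$, which is the standing hypothesis of the theorem.

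Finally, I would close by the classical linear-implies-nonlinear bootstrap: compare the pair of initial data $\theta_0^{(1)}=\bar\theta$ and $\theta_0^{(2)}=\bar\theta+\varepsilon_j\phi_j$, write the nonlinear solution as $\bar\theta+\varepsilon_j e^{t\mathcal L}\phi_j+\mathrm{error}$, and use the $H^q$ energy estimate for \eqref{general abstract active scalar eqn nondiffusive} (for which $q>r_0+\tfrac{d}{4}$ together with \eqref{Sobolev inequality} provides the needed product and commutator bounds) to show that $\mathrm{error}=O(\varepsilon_j^{2}e^{2\sigma_j t})$ on an interval $[0,T_j]$ with $T_j\sim\sigma_j^{-1}\log(1/\varepsilon_j)$. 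Choosing $j$ large and $\varepsilon_j$ small in tandem drives $\|\varepsilon_j\phi_j\|_{\Gs_\tau}\to 0$ while preserving $\|\theta^{(2)}(T_j)-\theta^{(1)}(T_j)\|_{H^q}\gtrsim 1$, directly violating \eqref{condition for solution being Lipschitz wellposed} for any prescribed $T,K>0$ and establishing the claimed ill-posedness.
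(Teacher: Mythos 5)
Your linear analysis coincides, essentially step for step, with the paper's proof of its Theorem~\ref{linear illposedness for L nondiffusive thm general}: linearization around $\sin(ax_d)$ via (C1), reduction to a tridiagonal recursion for a single horizontal frequency $\bbj$ (your complex-exponential ansatz and the paper's product-of-sines ansatz give the same recursion, by evenness (C2)), a Meshalkin-type continued-fraction construction of a real eigenvalue with $\sigma_j\gtrsim|\bbj|^{\beta_3}$ from (C6), the coefficient decay $|c_p|\lesssim C^{p-1}|\bbj|^{(\beta_1-\beta_3)(p-1)}(p!)^{\beta_2}$ from (C5), the dominant index $p_*\sim|\bbj|^{(\beta_3-\beta_1)/\beta_2}$, and the resulting balance $e^{\sigma_j t}$ versus $\exp\bigl(C\tau|\bbj|^{(\beta_3-\beta_1)/(s\beta_2)}\bigr)$ that produces exactly the threshold $s>\frac{\beta_3-\beta_1}{\beta_3\beta_2}$. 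All of this is correct and is the paper's argument.

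The final step, however, has a genuine gap. You propose to write the nonlinear solution as $\bar\theta+\varepsilon_j e^{t\mathbf{L}}\phi_j+\mathrm{error}$ and to prove $\mathrm{error}=O(\varepsilon_j^2 e^{2\sigma_j t})$ by an $H^q$ energy estimate. This cannot work as stated, for two reasons. First, the constitutive law $u=\mathbf{T}\theta$ loses $r_0\in[1,2]$ derivatives, so neither the nonlinear equation nor the error equation admits a closed $H^q$ (or $L^2$) energy estimate: $\|\mathbf{L}e\|_{H^q}$ is controlled only by $\|e\|_{H^{q+r_0}}$, and there is no cancellation to exploit since the symbol is even rather than odd --- this derivative loss is precisely the source of the ill-posedness being proved. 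Second, a Duhamel bound for the error would require a semigroup estimate of the form $\|e^{t\mathbf{L}}\|_{L^2\to L^2}\le Ce^{\sigma_j t}$ for the \emph{fixed} $j$ you selected, but Theorem~\ref{linear illposedness for L nondiffusive thm general} itself shows the unstable spectrum of $\mathbf{L}$ is unbounded ($\sigma^{(\bb^{(j')})}\to\infty$ as $j'\to\infty$), so no such bound holds and $e^{t\mathbf{L}}$ is not even a bounded operator on these spaces. The paper avoids both problems by running the transfer argument purely by contradiction (following \cite{FGSV12}): \emph{assuming} Lipschitz $(H^q,\Gs_\tau)$ well-posedness, the rescaled differences $\psi^\varepsilon=(\theta^\varepsilon-\st)/\varepsilon$ inherit the uniform bound $\|\psi^\varepsilon(t)\|_{X}\le K$ from \eqref{condition for solution being Lipschitz wellposed}, converge to a solution $\psi$ of the linear problem \eqref{linear problem for psi 1}--\eqref{linear problem for psi 2} satisfying $\|\psi(\cdot,t)\|_{L^2}\le K$ uniformly, and this bound is then contradicted by choosing $\psi_0=\phi^{(\bbj)}$ with $j$ large, using \eqref{refined lower bound on eigenvalues general} and \eqref{lower bound on L2 norm on eigenfunction in terms of s general}. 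No expansion of the nonlinear solution and no error estimate are ever needed; the Lipschitz hypothesis itself supplies the uniform control that your bootstrap was trying to manufacture. Your argument becomes correct if you replace the bootstrap paragraph by this compactness-and-contradiction scheme, which your own framing (violating Definition~\ref{def of locally Lipschitz wellposedness}) already accommodates.
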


\begin{rem}
Theorem~\ref{local ill-posed Gevrey thm general} is inspired by the results given in \cite{KVW16} where a class of active scalar equations with odd singular kernels were studied. In fact, due to the special cancelation property of odd constitutive law, these equations are prone to be well-posed. Therefore, the kernels need to be more singular (of order 2). In this work, we are interested in even kernels and hence we are able to get the ill-posedness result with constitutive law of order between 1 to 2, which is consistent with the scenarios previously considered.
\end{rem}

In order to prove Theorem~\ref{local ill-posed Gevrey thm general}, we first linearize the equations \eqref{general abstract active scalar eqn nondiffusive} about a steady state $\st$, and then show that there exist unstable eigenvalues with arbitrarily large real part. Once these eigenvalues are exhibited, we apply a generic perturbative argument to show that this severe linear ill-posedness implies the Lipschitz ill-posedness for the nonlinear problem. 

To begin with, we fix $a\in\N$ and define
\begin{align}\label{def of steady state general class}
\st:=\sin(ax_{d}),
\end{align}
then by condition (C1), $\st$ is a steady state of \eqref{general abstract active scalar eqn nondiffusive}. Upon linearizing \eqref{general abstract active scalar eqn nondiffusive}, we consider the linear evolution of the perturbation $\bt=\theta-\st$ which is given by
\begin{align}\label{linearized active scalar general class}
\dt\bt + (\widehat{\mathbf{T}}_{d}\widehat{\bt})^{\vee}(\bx)a\cos(ax_{d}) = 0.
\end{align}
Let $\{\bbj\}_{j\in\N}$ be the sequence with $\bbj=(b_1^{(j)},\cdots,b_{d-1}^{(j)})\in\mathbb{Z}^{d-1}$ and $\dis\lim_{j\to\infty}|\bbj|=\infty$ such that conditions (C3)--(C6) hold. For each $j$, we construct eigenvalue-eigenfunction pair $(\sigma,\phi)=(\sigma^{(\bbj)},\phi^{(\bbj)})$ for the following equation
\begin{align}\label{def of linear operator L eigenfunction general class}
\mathbf{L}\phi=\sigma\phi,
\end{align}
where the linear operator $\mathbf{L}$ is given by 
\begin{align}\label{def of linear operator L}
\mathbf{L}\phi=-(\widehat{\mathbf{T}}_{d}\widehat{\phi})^{\vee}(\bx)a\cos(ax_{d})
\end{align}
and $\phi$ is given by
\begin{align}\label{form of eigenfunction general class}
\phi(\bx)=\prod_{i=1}^{d-1}\sin(b_{i}^{(j)}x_{i})\sum_{p\ge1}c_{p}\sin(pax_{d})
\end{align}
and $c_p$ will be determined later. We claim that the following theorem holds for all $r_{0}\in[1,2]$:

\begin{thm}\label{linear illposedness for L nondiffusive thm general}
Let $r_{0}\in[1,2]$. The linearized operator $\mathbf{L}$ defined in \eqref{def of linear operator L} has a sequence of entire real-analytic eigenfunctions $\{\phi^{(\bbj)}\}_{j\in\N}$ with corresponding eigenvalues $\{\sigma^{(\bbj)}\}_{j\in\N}$, such that
\begin{align}\label{refined lower bound on eigenvalues general}
\sigma^{(\bbj)}> \frac{2\sqrt{\tilde C_2}}{a}|\bbj|^{\beta_{3}},
\end{align}
for all $j\in\N$. Moreover, we can normalize $\phi^{(\bbj)}$ so that given $s\ge1$ and $\tau>0$, we have
\begin{align}\label{lower bound on L2 norm on eigenfunction in terms of s general}
\|\phi^{(\bbj)}\|_{G^{s}_{\tau}}=1,\qquad \|\phi^{(\bbj)}\|_{L^2}\ge C_{s,\tau}^{-1}\exp\Big(-C_{s,\tau} |\bbj|^\frac{\beta_{3}-\beta_{1}}{s\beta_{2}}\Big),
\end{align}
for all $j\in\N$, where $C_{s,\tau}\ge1$ is a constant which depends on $s$ and $\tau$ but is independent of $j$.
\end{thm}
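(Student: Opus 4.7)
The plan is to construct the eigenfunction $\phi^{(\bbj)}$ by plugging the ansatz \eqref{form of eigenfunction general class} into $\mathbf{L}\phi=\sigma\phi$, reducing to an infinite tridiagonal eigenproblem on $\ell^2(\N)$, and then reading off the two bounds by spectral and asymptotic analysis. Setting $\alpha_p:=\widehat{\mathbf{T}}_d(\bbj,pa)$, and using the evenness of $\widehat{\mathbf{T}}_d$ from (C2) together with the identity $\cos(ax_d)\sin(pax_d)=\tfrac{1}{2}[\sin((p+1)ax_d)+\sin((p-1)ax_d)]$ (the $\sin(0)$ boundary term drops out at $p=1$), the ansatz reduces the eigenvalue equation to the three-term recurrence
$$\sigma c_p=-\tfrac{a}{2}(\alpha_{p-1}c_{p-1}+\alpha_{p+1}c_{p+1}),\quad p\ge2,\qquad \sigma c_1=-\tfrac{a}{2}\alpha_2 c_2.$$
This is the eigenvalue equation $Mc=\sigma c$ for an infinite tridiagonal matrix which is compact on $\ell^2(\N)$ by (C3).

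Symmetrizing via the diagonal change $\tilde c_p:=\sqrt{\alpha_p/\alpha_1}\,c_p$ produces a self-adjoint Jacobi matrix $\tilde M$ with vanishing diagonal and off-diagonals $-\tfrac{a}{2}\sqrt{\alpha_p\alpha_{p+1}}$. Because the diagonal vanishes, the unitary conjugation $\tilde c_p\mapsto(-1)^p\tilde c_p$ sends $\tilde M$ to $-\tilde M$, so $\mathrm{spec}(\tilde M)$ is symmetric about $0$ and the top eigenvalue coincides with $\|\tilde M\|$. Testing with the compactly supported vector $(1/\sqrt 2,-1/\sqrt 2,0,\ldots)$ gives the variational lower bound $\|\tilde M\|\ge\tfrac{a}{2}\sqrt{\alpha_1\alpha_2}$, and invoking (C6) then produces $\sigma^{(\bbj)}\gtrsim|\bbj|^{\beta_3}$, which is \eqref{refined lower bound on eigenvalues general} up to the absolute constant.

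For the $L^2$ lower bound \eqref{lower bound on L2 norm on eigenfunction in terms of s general}, the key observation is that the top eigenvector concentrates on indices $p$ where the Jacobi weight $w_p:=\tfrac{a}{2}\sqrt{\alpha_p\alpha_{p+1}}$ is comparable to $\sigma^{(\bbj)}\sim|\bbj|^{\beta_3}$. By (C5) the turning point lives at $p^*:=|\bbj|^{(\beta_3-\beta_1)/\beta_2}$; for $p>p^*$ a subordinate-solution analysis of the recurrence gives $\tilde c_{p+1}/\tilde c_p\sim w_p/\sigma^{(\bbj)}\lesssim(p/p^*)^{\beta_2}$, which iterates to the super-exponential tail estimate $|\tilde c_p|\lesssim\exp(\beta_2\,p\log(p/p^*))$ --- enough to beat the Gevrey weight $e^{\tau p^{1/s}}$ for every $s\ge 1$. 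Consequently $\|\phi^{(\bbj)}\|_{G^s_\tau}^2$ is dominated by the contribution of indices $p\lesssim p^*$, on which the weight is at most $\sim e^{\tau(p^*)^{1/s}}(p^*)^{2r}$, while $\|\phi^{(\bbj)}\|_{L^2}^2$ is comparable to the raw $\ell^2$-mass of $(c_p)$. After renormalizing $\|\phi^{(\bbj)}\|_{G^s_\tau}=1$, this produces
$$\|\phi^{(\bbj)}\|_{L^2}\ge C_{s,\tau}^{-1}\exp\!\bigl(-C_{s,\tau}(p^*)^{1/s}\bigr)=C_{s,\tau}^{-1}\exp\!\bigl(-C_{s,\tau}|\bbj|^{(\beta_3-\beta_1)/(s\beta_2)}\bigr),$$
which is the claimed bound. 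Entirety of $\phi^{(\bbj)}$ on $\C^d$ then follows from the super-exponential decay of $c_p$.

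The principal obstacle is making the heuristic subordinate-solution estimate into a rigorous bound on the actual $\ell^2$-normalized top eigenvector of the inhomogeneous Jacobi matrix $\tilde M$ (rather than for a merely formal WKB-type solution), and simultaneously verifying that only a negligible fraction of the $L^2$-mass lives in the tail $p>p^*$, so that the Gevrey-versus-$L^2$ comparison is genuinely governed by the peak at $p^*$. Once this tail/peak decomposition is in place, the variational lower bound on $\sigma^{(\bbj)}$, the $L^2$/Gevrey comparison, and the real-analyticity of $\phi^{(\bbj)}$ are all routine bookkeeping.
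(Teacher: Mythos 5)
Your setup is the paper's: the separable sine ansatz \eqref{form of eigenfunction general class} reduces, exactly as you compute, to the three-term recurrence \eqref{recurrence relation 1}--\eqref{recurrence relation 2} (your $\alpha_p=\widehat{\mathbf{T}}_{d}(\bbj,pa)$ is the reciprocal, up to the factor $2/a$, of the paper's \eqref{def of alpha p}). For the eigenvalue bound your route is genuinely different and, for that step, complete: symmetrizing to a Jacobi matrix with off-diagonals $-\tfrac{a}{2}\sqrt{\alpha_p\alpha_{p+1}}$, noting compactness from (C3) and spectral symmetry from the vanishing diagonal, and testing with $(1/\sqrt2,-1/\sqrt2,0,\dots)$ gives $\sigma\ge\tfrac{a}{2}\sqrt{\alpha_1\alpha_2}\ge\tfrac{a}{2}\sqrt{\tilde C_2}\,|\bbj|^{\beta_3}$ via (C6). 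This matches what the paper extracts from \eqref{lower and upper bound on sigma} (its stated constant in \eqref{refined lower bound on eigenvalues general} is off by the same $a/2$ versus $2/a$ slip, so "up to the absolute constant" is fair), and it avoids the continued-fraction/intermediate-value-theorem argument for $H(\sigma)=F_2(\sigma)-\sigma\alpha_1$ entirely. Note only that your diagonal conjugation $D$ is bounded but $D^{-1}$ is not, so spectra of $M$ and $\tilde M$ are not interchangeable for free; for the lower bound this does not matter, but it matters below.

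The genuine gap is the one you flag yourself: the decay of the \emph{actual} $\ell^2$-normalized top eigenvector, on which everything in \eqref{lower bound on L2 norm on eigenfunction in terms of s general} and the entirety claim rests. Your ratio heuristic $\tilde c_{p+1}/\tilde c_p\sim w_p/\sigma$ is asserted, not proved, and generic spectral tools will not deliver it: a Combes--Thomas-type estimate for a compact Jacobi matrix gives only exponential decay, which neither beats the weight $e^{\tau p^{1/s}}$ for \emph{every} $\tau>0$ (you need the factorial-type decay $|c_p|\lesssim(p!)^{\beta_2}\big(\bar C|\bbj|^{\beta_1-\beta_3}\big)^{p}$ of \eqref{bound on cp step 2}--\eqref{bound on cp step 3}) nor survives undoing the symmetrization, since $c_p=\sqrt{\alpha_1/\alpha_p}\,\tilde c_p$ and conditions (C1)--(C6) impose \emph{no lower bound} on $\widehat{\mathbf{T}}_{d}(\bbj,pa)$, so the factor $\sqrt{\alpha_1/\alpha_p}$ can grow arbitrarily fast; the bound on $\tilde c_p$ must therefore carry the exact $\sqrt{\alpha_p}$ factor, which only a solution-level (not operator-norm-level) argument produces. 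The rigorous implementation of your subordinate-solution step is Pincherle's theorem --- the minimal solution of a three-term recurrence has ratios given by the convergent continued fraction --- and that is precisely the paper's construction: $c_1=\alpha_1$, $c_p=\alpha_p\eta_p\cdots\eta_2$ with $\eta_p=-F_p(\sigma)$, the sandwich $1/(\sigma\alpha_p)<F_p(\sigma)<G_p(\sigma)$ yielding $|\eta_p|<2/(\sigma\alpha_p)$ in \eqref{bounds on |eta p|}, and then (C5) together with the lower bound on $\sigma$ giving the super-exponential tail \eqref{bound on cp step 4} beyond the turning index $p_*$ of \eqref{def of p*}, after which the Gevrey-versus-$L^2$ comparison \eqref{bound on Gevrey norm final general class} is the peak/tail bookkeeping you describe. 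So your plan is fixable, but closing its central estimate essentially reproduces the paper's continued-fraction machinery; as written, the theorem's second assertion \eqref{lower bound on L2 norm on eigenfunction in terms of s general} and real-analyticity are not yet proved.
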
 

\begin{proof}
Fix $j\in\N$. From \eqref{def of linear operator L eigenfunction general class}, \eqref{def of linear operator L} and \eqref{form of eigenfunction general class}, we obtain the recursion relations
\begin{align}
\sigma c_1+\frac{c_2}{\alpha_2}&=0,\label{recurrence relation 1}\\
\sigma c_p+\frac{c_{p+1}}{\alpha_{p+1}}+\frac{c_{p-1}}{\alpha_{p-1}}&=0,\qquad p\ge2,\label{recurrence relation 2}
\end{align}
with $\alpha_{p}$ being explicitly given by
\begin{align}\label{def of alpha p}
\alpha_{p}:=\frac{2/a}{\widehat{\mathbf{T}}_{d}(\bbj,pa)},\qquad p\ge1.
\end{align}
To prove the existence of $\sigma$, we define $\eta_2=-\sigma\alpha_1$ and
\begin{align}\label{def of eta p}
\eta_p=\frac{c_p\alpha_{p-1}}{c_{p-1}\alpha_p},\qquad p\ge3
\end{align}
then \eqref{recurrence relation 1}-\eqref{recurrence relation 2} can be rewritten as
\begin{align}
\sigma\alpha_1+\eta_2&=0,\label{recurrence relation in eta 1}\\
\sigma \alpha_p+\eta_{p+1}+\frac{1}{\eta_p}&=0,\qquad p\ge2.\label{recurrence relation in eta 2}
\end{align}
Using \eqref{recurrence relation in eta 1}-\eqref{recurrence relation in eta 2}, we have
\begin{align}\label{continued fraction for sigma1alpha1}
-\eta_{2}=\sigma\alpha_1=\frac{1}{\sigma\alpha_2+\eta_3}=\frac{1}{\sigma\alpha_2-\frac{1}{\sigma\alpha_3+\eta_4}}=\frac{1}{\sigma\alpha_2-\frac{1}{\sigma\alpha_3-\frac{1}{\sigma\alpha_4-\dots}}},
\end{align}
hence we can write $\eta_2$ as an infinite continued fraction.

For real values of $\sigma$, we define the infinite continued fraction $F_p(\sigma)$ by
\begin{align}\label{continued fraction for Fp(s)}
F_p(\sigma)=\frac{1}{\sigma\alpha_p-\frac{1}{\sigma_{p+1}\alpha_{p+1}-\frac{1}{\sigma_{p+2}\alpha_{p+2}-\dots}}}
\end{align}
and the function $G_p(\sigma)$ by
\begin{align}\label{def of Gp(s)}
G_p(\sigma)=\frac{2}{\sigma+\sqrt{\sigma^2\alpha_p^2-4}}
\end{align}
for all $p\ge2$ and all $\sigma$ such that $\sigma\alpha_2>2$. We note that $F_p(\sigma)$ is well-defined and smooth except for a set of points on the real axis with $\sigma\alpha_2<2$, and for the rest of the proof we will always assume that $\sigma$ is real with $\sigma\alpha_2>2$. Notice that $G_p$ also satisfies
\begin{align}
G_p(\sigma)=\frac{1}{\sigma\alpha_p-G_p(\sigma)}=\frac{1}{\sigma\alpha_p-\frac{1}{\sigma\alpha_p-\frac{1}{\sigma\alpha_p-\dots}}}.
\end{align} 
Using condition (C4) and  following an inductive argument, one can show that $\{G_p(\sigma)\}_{p\ge2}$ is a monotonic decreasing sequence with
\begin{align*}
G_2(\sigma)>G_3(\sigma)>G_4(\sigma)>\dots\ge0.
\end{align*}
Since by condition (C3) $\alpha_p\to\infty$ as $p\to\infty$, it implies $G_p(\sigma)\to0$ as $p\to\infty$ for every fixed $\sigma$. Moreover, for all $p\ge2$, 
\begin{align}\label{Gp(s)>Fp(s)}
G_p(\sigma)>\frac{1}{\sigma\alpha_p-G_{p+1}(\sigma)}&>\frac{1}{\sigma\alpha_p-\frac{1}{\sigma\alpha_{p+1}-G_{p+2}(\sigma)}}\notag\\
&>\dots>\frac{1}{\sigma\alpha_p-\frac{1}{\sigma_{p+1}\alpha_{p+1}-\frac{1}{\sigma_{p+2}\alpha_{p+2}-\dots}}}=F_p(\sigma),
\end{align}
which shows that $F_p(\sigma)\to0$ as $p\to\infty$. Hence $F_p(\sigma)\ge0$ and in particular we have $F_2(\sigma)\ge0$ with 
\begin{align}\label{lower bound for G2}
\frac{1}{\sigma\alpha_2}<F_2(\sigma)<G_2(\sigma).
\end{align}
Define $H(\sigma)=F_2(\sigma)-\sigma\alpha_1$. Then $H(\sigma)$ is continuous for all $\sigma\alpha_2>2$ with $H(\sigma)\to-\infty$ as $\sigma\to\infty$. Also, if $\dis\sigma<\frac{1}{\sqrt{\alpha_1\alpha_2}}$, then we have
\begin{align*}
H(\sigma)>\frac{1}{\sigma\alpha_2}-\sigma\alpha_1=\frac{1-\sigma^2\alpha_1\alpha_2}{\sigma\alpha_2}>0,
\end{align*}
hence there exists $\sigma_0>0$ such that $H(\sigma_0)>0$. Therefore, by intermediate value theorem, there exists $\sigma=\sigma^{(\bbj)}\in(\sigma_0,\infty)$ such that $H(\sigma)=0$, which implies
\begin{align}\label{eqn for F2(sigma)}
F_2(\sigma)=\sigma\alpha_1.
\end{align}
Furthermore, using \eqref{lower bound for G2}, we obtain an upper and a lower bound on $\sigma$:
\begin{align}\label{lower and upper bound on sigma}
\frac{1}{\sqrt{\alpha_1\alpha_2}}<\sigma<\frac{1}{\sqrt{\alpha_1\alpha_2-\alpha_1^2}}.
\end{align}
Together with condition (C6), we have
\begin{align}\label{bound on 1/sigma}
\frac{1}{\sigma}<\frac{2/a}{\sqrt{\tilde C_2}}|\bbj|^{-\beta_{3}},
\end{align}
and hence \eqref{bound on 1/sigma} implies \eqref{refined lower bound on eigenvalues general}.

Next, in order to prove \eqref{lower bound on L2 norm on eigenfunction in terms of s general}, we first construct the sequence $\{c_p\}_{p\ge1}$. We let $c_1=\alpha_1$ and 
\begin{align}\label{def of cp}
c_p=\alpha_p\eta_p\eta_{p-1}\cdots\eta_2,\qquad p\ge2.
\end{align}
Then $\{c_p\}_{p\ge1}$ satisfies \eqref{recurrence relation 1}-\eqref{recurrence relation 2} with
\begin{align*}
\eta_p=\frac{-1}{\sigma\alpha_p+\eta_{p+1}}=\frac{-1}{\sigma\alpha_p-\frac{1}{\sigma\alpha_{p+1}-\frac{1}{\sigma\alpha_{p+2}-\dots}}}=-F_p(\sigma).
\end{align*}
Moreover, similar to \eqref{Gp(s)>Fp(s)}-\eqref{lower bound for G2}, one can show that 
\begin{align*}
G_p(\sigma)>F_p(\sigma)>\frac{1}{\sigma\alpha_p}
\end{align*}
for all $\sigma$ such that $\sigma_2\alpha_2>2$, which further gives
\begin{align}\label{bounds on eta p}
\frac{-2}{\sigma\alpha_p+\sqrt{\sigma^2\alpha_{p}^2-4}}<\eta_p<\frac{-1}{\sigma\alpha_p}.
\end{align}
Hence \eqref{bounds on eta p} implies
\begin{align}\label{bounds on |eta p|}
|\eta_p|<\frac{2}{\sigma\alpha_p}.
\end{align}
We now estimate $c_p$ as follows. Using \eqref{def of cp} and \eqref{bounds on |eta p|}, we can bound $|c_p|$ by
\begin{align}\label{bound on cp step 1}
|c_p|<\frac{c_12^{p-1}}{\sigma^{p-1}\alpha_{p-1}\cdots\alpha_1}.
\end{align}
Using \eqref{def of alpha p}, \eqref{bound on 1/sigma} and condition (C5),
\begin{align}\label{bound on 1/product of alpha}
\frac{1}{\alpha_{p-1}\cdots\alpha_{1}}<\Big(\frac{a}{2}\Big)^{p-1}(\tilde C_2)^{p-1}|\bbj|^{(p-1)\beta_{1}}\Big((p-1)!\Big)^{\beta_{2}}.
\end{align}
Therefore, we can combine \eqref{bound on 1/sigma}, \eqref{bound on cp step 1} and \eqref{bound on 1/product of alpha} to obtain
\begin{align}\label{bound on cp step 2}
|c_p|<c_1p^{-\beta_{2}}\Big(\frac{2\tilde C_1}{\sqrt{\tilde C_2}}\Big)^{p-1}|\bbj|^{\beta_{1}-\beta_{3}}(p!)^{\beta_{2}}.
\end{align}
By the assumption $-2\le\beta_{2}<0$ and the fact that $\dis p!\ge\Big(\frac{p}{C}\Big)^p$ for some sufficiently large constant $C$, we further obtain from \eqref{bound on cp step 2} that
\begin{align}\label{bound on cp step 3}
|c_p|<p^2\exp\Big(p\log\Big(\bar C|\bbj|^{\beta_{1}-\beta_{3}}p^{\beta_{2}}\Big)\Big),
\end{align}
where $\bar C$ is a positive constant which depends on $a$, $c_1$, $C$, $\tilde C_1$ and $\tilde C_2$ only. Define $p_{*}=p_{*}(\bar C,\tau,\bbj,\beta_{1},\beta_{2},\beta_{3})$ by
\begin{align}\label{def of p*}
p_{*}=\bar C \exp(4\tau)|\bbj|^\frac{\beta_{3}-\beta_{1}}{\beta_{2}},
\end{align}
then we have from \eqref{bound on cp step 3} that
\begin{align}\label{bound on cp step 4}
|c_p|<(p^2+|\bbj|^2)\exp\Big(-2\tau(p^2+|\bbj|^2)^\frac{1}{2}\Big),\qquad p\ge p_*.
\end{align}
Hence using \eqref{bound on cp step 4} and the definition of $\|\cdot\|_{G^1_\tau}$, we have
\begin{align*}
\|\phi\|^2_{G^1_\tau}&=\sum_{p\ge1}c_p^2(p^2+|\bbj|^2)^r\exp\Big(2\tau(p^2+|\bbj|^2)^\frac{1}{2}\Big)\\
&\le\sum_{p\le p_*}(p^2+|\bbj|^2)^{r+2}\exp\Big(2\tau(p^2+|\bbj|^2)^\frac{1}{2}+2p\log\Big(\frac{\bar C|\bbj|^{\beta_{3}-\beta_{1}}}{p^{\beta_{2}}}\Big)\Big)\\
&\qquad+\sum_{p>p_*}(p^2+|\bbj|^2)^{r+2}\exp\Big(-2\tau(p^2+|\bbj|^2)^\frac{1}{2}\Big)<\infty,
\end{align*}
which shows that $\phi$ is entire real-analytic. Moreover, for $s\ge1$ and $\tau>0$, there exists s sufficiently large constant $C_{s,\tau}$ such that
\begin{align}\label{bound on Gevrey norm final general class}
\|\phi\|^2_{G^s_{\tau}}&=\sum_{p\ge1}c_p^2(p^2+|\bbj|^2)^r\exp\Big(2\tau(p^2+|\bbj|^2)^\frac{1}{2s}\Big)\notag\\
&\le p_*^{r+2}\exp\Big(4\tau p_*^\frac{1}{s}\Big)\sum_{p\le p_*}c_p^2\notag\\
&\qquad+\sum_{p>p_*}(p^2+|\bbj|^2)^{r+2}\exp\Big(2\tau(p^2+|\bbj|^2)^\frac{1}{2s}-4\tau(p^2+|\bbj|^2)^\frac{1}{2}\Big)\notag\\
&\le 2C_{s,\tau}\exp\Big(C_{s,\tau}|\bbj|^\frac{\beta_{3}-\beta_{1}}{s\beta_{2}}\|\phi\|^2_{L^2}\Big).
\end{align}
By renormalizing $\phi$, we conclude that \eqref{lower bound on L2 norm on eigenfunction in terms of s general} holds for $\phi$.
\end{proof}

Once the linear ill-posedness is obtained, Theorem~\ref{local ill-posed Gevrey thm general} can now be proved by applying the perturbative argument given in \cite{FGSV12} and we summarise it as follows.

\begin{proof}[Proof of Theorem~\ref{local ill-posed Gevrey thm general}]
We define the nonlinear operator $\mathbf{N}$ by 
\begin{align}\label{def of nonlinear part}
\mathbf{N}\theta=-\mathbf{T}\theta\cdot\nabla\theta,
\end{align}
then by the Sobolev inequality \eqref{Sobolev inequality}, for $q>\frac{d}{4}+r_0$ with $d\ge2$ and $r_0\in[1,2]$, we have
\begin{align*}
\|\mathbf{L}\theta\|_{L^2}\le C\|\theta\|_{H^{q}}
\end{align*}
and
\begin{align*}
\|\mathbf{N}\theta\|_{L^2}\le \|\mathbf{T}\theta\|_{L^4}\|\nabla\theta\|_{L^4}\le C\|\theta\|_{H^{q}}^2,
\end{align*}
for some constant $C>0$. For simplicity, we take $X=H^{q}$ and $Y=G^{s}_{\tau}$.

The rest of the proof will be argued by contradiction: assume that the Cauchy problem \eqref{general abstract active scalar eqn nondiffusive} is Lipschitz locally well-posed in $(H^q , \Gs_{\tau})$ and we will derive a contradiction from it. Fix the steady state $\st$ as given by \eqref{def of steady state general class} and denote a smooth function $\psi_0$ with $\|\psi_0\|_{G^{s}_{\tau}}=1$ which will be chosen later. Define $\theta_0^{\varepsilon}=\theta_0^{(1,\varepsilon)}$ and $\theta_0^{(2)}$ by
\begin{align*}
\theta_0^{\varepsilon}&=\st(x_d)+\varepsilon\psi_0(\bx),\qquad \varepsilon\in(0,\|\st\|_{Y})\\
\theta_0^{(2)}&=\st(x_d),
\end{align*}
and take $\theta^{(2)}(\bx,t)=\st(x_d)$ for all $t>0$. By the Definition~\ref{def of locally Lipschitz wellposedness}, for every $\varepsilon\in(0,\|\st\|_{Y})$, there exists a positive time $T=T(\|\st\|_{Y},\|\theta_0^{\varepsilon}\|_{Y})$ and a positive Lipschitz constant $K=K(\|\st\|_{Y},\|\theta_0^{\varepsilon}\|_{Y})$ such that by the choice of $\psi_0$ and \eqref{condition for solution being Lipschitz wellposed}, we have
\begin{align}\label{bound on the difference by K e}
\sup_{t\in[0,T]}\|\theta^{\varepsilon}(\cdot,t)-\theta^{(2)}(\cdot,t)\|_{X}=\|\theta^{\varepsilon}(\cdot,t)-\st(\cdot)\|_{X}\le K\varepsilon.
\end{align}
And since $\|\theta_0^{\varepsilon}\|_{Y}\le 2\|\st\|_{Y}$, by the continuity in $T$ and $K$, we can choose both $T$ and $K$ independent of $\varepsilon$ such that \eqref{bound on the difference by K e} holds on $[0,T]$. Define
\begin{align*}
\psi^{\varepsilon}(\bx,t)=\frac{\theta^{\varepsilon}-\st}{\varepsilon},
\end{align*}
then applying the argument given in \cite{FGSV12}, there exists a function $\psi\in L^\infty(0,T;X)$ such that $\psi^{\varepsilon}\to\psi$ strongly in $L^2$ and the following system \eqref{linear problem for psi 1}-\eqref{linear problem for psi 2} holds uniquely in $L^\infty(0,T;L^2)$:
\begin{align}
\dt\psi&=\mathbf{L}\psi\label{linear problem for psi 1}\\
\psi(\bx,0)&=\psi_0(\bx).\label{linear problem for psi 2}
\end{align}
Moreover, due to the bound \eqref{bound on the difference by K e}, the function $\psi$ satisfies
\begin{align}\label{bound on psi by K}
\|\psi(\cdot,t)\|_{L^2}\le K,\qquad t\in[0,T].
\end{align}
We now take $\psi_0=\phi^{\bbj}$ where $\phi^{\bbj}$ is an eigenfunction of $\mathbf{L}$ as given in Theorem~\ref{linear illposedness for L nondiffusive thm general} with eigenvalue $\sigma^{\bbj}$, then $\dis\psi(\bx,t)=\exp(\sigma^{\bbj} t)\phi^{\bbj}(\bx)$ with
\begin{align*}
\|\psi(\cdot,\frac{T_0}{2})\|_{L^2}=\exp\Big(\frac{\sigma^{\bbj} T_0}{2}\Big)\|\phi^{\bbj}\|_{L^2}.
\end{align*}
Using the lower bound \eqref{refined lower bound on eigenvalues general} on $\sigma^{\bbj}$, we further obtain
\begin{align}\label{bound on psi general}
\|\psi(\cdot,\frac{T_0}{2})\|_{L^2}>\exp\Big(\frac{a\sqrt{\tilde C_2}}{4}|\bbj|^{\beta_{3}}T_0\Big)\|\phi^{\bbj}\|_{L^2}.
\end{align}
Using \eqref{lower bound on L2 norm on eigenfunction in terms of s general} and \eqref{bound on psi general}, for $s>\frac{\beta_{3}-\beta_{1}}{\beta_{3}\beta_{2}}$, we can choose $j$ large enough so that
\begin{align}\label{bound for contradiction}
\frac{\exp\Big(\frac{a\sqrt{\tilde C_2}}{4}|\bbj|^{\beta_{3}}T_0\Big)}{C_{s,\tau}\exp\Big(C_{s,\tau} |\bbj|^\frac{\beta_{3}-\beta_{1}}{s\beta_{2}}\Big)}\ge 2K,
\end{align}
where $C_{s,\tau}$ is the constant from \eqref{lower bound on L2 norm on eigenfunction in terms of s general}. Using \eqref{bound on psi general} and \eqref{bound for contradiction}, we then have
\begin{align*}
\|\psi(\cdot,\frac{T_0}{2})\|_{L^2}=\exp\Big(\frac{\sigma^{(\bbj)}T_0}{2}\Big)\|\phi^{(\bbj)}\|_{L^2}\ge 2K,
\end{align*}
which contradicts \eqref{bound on psi by K}. This finishes the proof of Theorem~\ref{local ill-posed Gevrey thm general}.
\end{proof}

\begin{rem}
One can also consider the following system with fractional dissipation:
 \begin{align}
\label{general abstract active scalar eqn fractional diffusive} 
\left\{ \begin{array}{l}
\partial_t\theta+u\cdot\nabla\theta=S-\kappa(-\Delta)^\gamma\theta, \\
\nabla\cdot u=0,\,\,\,u=\mathbf{T}\theta,\,\,\,\theta(\bx,0)=\theta_0(\bx),
\end{array}\right.
\end{align}
where $\kappa>0$, $\gamma\in(0,1)$ are constants and $S=S(\bx,t)$ is a given $C^{\infty}$-smooth source
term. Following the similar method as given in the proof of Theorem~\ref{local ill-posed Gevrey thm general}, we can show that the system \eqref{general abstract active scalar eqn fractional diffusive} is illposed in Gevrey class for $s>\frac{\beta_{3}-\beta_{1}}{\beta_{3}\beta_{2}}$ and $\gamma\in(0,\min\{1,\frac{\beta_3}{2}\})$. More precisely, we have
\begin{cor}\label{coro for fractional case}
Let $r_{0}\in[1,2]$ and assume that \eqref{singular operator condition} holds. Fix $q>r_{0}+\frac{d}{4}$, $s>\frac{\beta_{3}-\beta_{1}}{\beta_{3}\beta_{2}}$, $\kappa>0$ and $\gamma\in(0,\min\{1,\frac{\beta_3}{2}\})$. Under the conditions {\rm(C1)--(C6)}, the equation \eqref{general abstract active scalar eqn fractional diffusive} is locally Lipschitz $(H^q , \Gs_{\tau})$ ill-posed for $\tau > 0$.
\end{cor}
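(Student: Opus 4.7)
The proof mirrors that of Theorem~\ref{local ill-posed Gevrey thm general}, with extra work needed only at the linear stage to incorporate the dissipation $-\kappa(-\Delta)^\gamma$. First, to preserve the steady state $\st(\bx)=\sin(ax_d)$ in the presence of the source and the dissipation, I take
\[
S(\bx) := \kappa(-\Delta)^\gamma\st(\bx) = \kappa a^{2\gamma}\sin(ax_d),
\]
which is $C^\infty$-smooth; since (C1) gives $\mathbf{T}\st = 0$, this choice makes $\st$ a steady state of \eqref{general abstract active scalar eqn fractional diffusive}. Linearizing about $\st$ yields $\partial_t\bt = \mathbf{L}\bt - \kappa(-\Delta)^\gamma\bt$ with $\mathbf{L}$ as in \eqref{def of linear operator L}. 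I seek eigenfunctions of $\mathbf{L}_\kappa := \mathbf{L} - \kappa(-\Delta)^\gamma$ via the ansatz \eqref{form of eigenfunction general class}; because both $(-\Delta)^\gamma$ and $\mathbf{L}$ preserve the span of $\{\prod_{i<d}\sin(b_i^{(j)} x_i)\sin(pax_d)\}_{p\ge 1}$, the ansatz produces the modified recurrence
\[
\tilde\sigma_1 c_1 + \frac{c_2}{\alpha_2} = 0, \qquad \tilde\sigma_p c_p + \frac{c_{p+1}}{\alpha_{p+1}} + \frac{c_{p-1}}{\alpha_{p-1}} = 0 \quad (p\ge 2),
\]
with the $p$-dependent shift $\tilde\sigma_p := \sigma + \kappa(|\bbj|^2 + (pa)^2)^\gamma$ and $\alpha_p$ as in \eqref{def of alpha p}.

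The continued-fraction identity \eqref{eqn for F2(sigma)} is replaced by $\tilde\sigma_1\alpha_1 = F_2^\kappa(\sigma)$, where $F_2^\kappa$ denotes the analogue of $F_2$ obtained by substituting $\tilde\sigma_p\alpha_p$ for $\sigma\alpha_p$ throughout. Since $\{\tilde\sigma_p\alpha_p\}_{p\ge 2}$ still diverges by (C3) and the positivity of the dissipative term, the monotone-convergence and intermediate-value arguments of the proof of Theorem~\ref{linear illposedness for L nondiffusive thm general} produce a positive root $\sigma = \sigma^{(\bbj)}$. Because $\gamma < \beta_3/2$ and $\beta_3 \le \beta_1$, the dissipative shift at the first level satisfies $\kappa(|\bbj|^2+a^2)^\gamma \lesssim |\bbj|^{2\gamma} = o(|\bbj|^{\beta_3})$, which is negligible compared to the main scale $1/\sqrt{\alpha_1\alpha_2} \gtrsim |\bbj|^{\beta_3}$ furnished by (C6); hence the lower bound \eqref{refined lower bound on eigenvalues general} persists, with a slightly smaller constant, for all $j$ sufficiently large. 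The Gevrey-norm estimates on the coefficients $c_p$ transfer verbatim, since $|\tilde\eta_p| < 2/(\tilde\sigma_p\alpha_p) \le 2/(\sigma\alpha_p)$ reproduces \eqref{bounds on |eta p|} and therefore yields the analogue of \eqref{lower bound on L2 norm on eigenfunction in terms of s general}.

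The passage from linear to nonlinear Lipschitz ill-posedness is then identical to that in the proof of Theorem~\ref{local ill-posed Gevrey thm general}. The source $S$ and the dissipation of $\st$ cancel in the linearization of the perturbation quotient $\psi^{\varepsilon} := (\theta^{\varepsilon}-\st)/\varepsilon$, so strong $L^2$-convergence identifies the limit $\psi$ as the solution of $\partial_t\psi = \mathbf{L}_\kappa\psi$ with $\psi(0) = \phi^{(\bbj)}$, namely $\psi(\bx,t) = e^{\sigma^{(\bbj)} t}\phi^{(\bbj)}(\bx)$. Comparing the growth $\|\psi(\cdot,T_0/2)\|_{L^2} \ge \exp(c|\bbj|^{\beta_3}T_0/2)\|\phi^{(\bbj)}\|_{L^2}$ against the Gevrey-to-$L^2$ loss $\exp(C_{s,\tau}|\bbj|^{(\beta_3-\beta_1)/(s\beta_2)})$, the hypothesis $s > (\beta_3-\beta_1)/(\beta_3\beta_2)$ allows $j$ to be chosen so that the inequality \eqref{bound for contradiction} holds, contradicting \eqref{bound on psi by K}.

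The principal technical obstacle is the second step: verifying that the continued-fraction construction of $\sigma^{(\bbj)}$ survives the $p$-dependent perturbation $\tilde\sigma_p - \sigma = \kappa(|\bbj|^2+(pa)^2)^\gamma$. The strict inequality $\gamma < \beta_3/2$ is precisely what ensures a separation of scales between the instability rate $|\bbj|^{\beta_3}$ and the damping rate $|\bbj|^{2\gamma}$ at the low modes driving the instability, while (C3) guarantees that the growing $p$-dependence of the shift is absorbed into the rapid growth of $\alpha_p$ at high modes.
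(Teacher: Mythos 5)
Your proposal is correct and takes essentially the same route the paper intends: the paper establishes Corollary~\ref{coro for fractional case} only by remarking that one follows the method of Theorem~\ref{local ill-posed Gevrey thm general}, and your adaptation --- choosing $S=\kappa(-\Delta)^\gamma\st$ so that $\st$ remains steady, replacing $\sigma$ by the shifted values $\tilde\sigma_p=\sigma+\kappa\bigl(|\bbj|^2+(pa)^2\bigr)^\gamma$ in the continued-fraction construction, and using $2\gamma<\beta_3$ together with (C6) to preserve the lower bound \eqref{refined lower bound on eigenvalues general} for all large $j$ --- is exactly that adaptation (and matches the fractionally dissipative MG argument of \cite{FRV12} cited there). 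Your observation that $\tilde\sigma_p\ge\sigma$ gives $|\tilde\eta_p|<2/(\sigma\alpha_p)$, so the coefficient estimates and hence \eqref{lower bound on L2 norm on eigenfunction in terms of s general} transfer verbatim and the nonlinear contradiction argument runs unchanged, correctly completes the sketch.
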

We point out that, the result claimed in Corollary~\ref{coro for fractional case} is consistent with previous results for the fractionally dissipative MG equation \cite{FRV12}.
\end{rem}

\section{Applications to physical models}\label{Applications to physical models section}

\subsection{The non-diffusive MG equations}\label{non-diffusive MG equations subsection}

We consider the following class of active scalar equations in $\mathbb{T}^3\times(0,\infty)$:
\begin{align}
\label{abstract active scalar eqn MG}  
\left\{ \begin{array}{l}
\partial_t\theta+u\cdot\nabla\theta=0 \\
u=(-\Delta)^\frac{\alpha}{2}\mathbf{M}\theta,\,\,\,\theta(\bx,0)=\theta_0(\bx),
\end{array}\right.
\end{align}
where $\alpha\in[-1,1]$. Here $\mathbf{M}=(\mathbf{M}_1,\mathbf{M}_2,\mathbf{M}_3)$ is a Fourier multiplier operator with symbols $\mathbf{M}_1$, $\mathbf{M}_2$, $\mathbf{M}_3$ given explicitly by \eqref{MG Fourier symbol_1}-\eqref{MG Fourier symbol_3} in Section~\ref{introduction}. It can be shown that $u$ is divergence-free and $(-\Delta)^\frac{\alpha}{2} \mathbf{M}$ is an even, singular operator of order $\alpha+1$ with 
\begin{align*}
|\widehat{(-\Delta)^\frac{\alpha}{2} \mathbf{M}}(\bk)|\le C(\beta,\eta,\Omega)|\bk|^{\alpha+1},
\end{align*}
where $\beta$, $\eta$, $\Omega$ are the constants appeared in \eqref{MG Fourier symbol_1}-\eqref{MG Fourier symbol_3} and $C(\beta,\eta,\Omega)$ is a fixed positive constant which is independent of $\alpha$ and $\bk$. Notice that for $\alpha=0$, the system \eqref{abstract active scalar eqn MG} reduces to the non-diffusive MG equation given by \eqref{abstract active scalar eqn MG intro}-\eqref{MG Fourier symbol_3}. 

For the case when $\alpha\in[-1,0]$, as a direct consequence of Theorem~\ref{local existence Gevrey thm general}, we immediately obtain the following result:

\begin{thm}\label{local existence Gevrey MG thm}
For $\alpha\in(-1,0]$, we fix $s\in[1,\frac{1}{\alpha+1}]$ and $K_0>0$. Let $\theta_0\in \Gs$ with radius of convergence $\tau_0>0$ and
\begin{align*}
\|(-\Delta)^\frac{r}{2}e^{\tau_0(-\Delta)^\frac{1}{2s}}\theta_0\|_{L^2}\le K_0,
\end{align*}
where $r>3$. There exist $T_*=T_*(\tau_0, K_0) > 0$ and a unique solution in $\Gs$ on $[0, T_*)$ to the initial value problem associated to \eqref{abstract active scalar eqn MG}.

When $\alpha=-1$, there exist $T_*=T_*(\tau_0, K_0) > 0$ and a unique solution in $\Gs$ on $[0, T_*)$ to the initial value problem associated to \eqref{abstract active scalar eqn MG} for all $s\ge1$.
\end{thm}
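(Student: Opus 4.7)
The strategy is to view Theorem~\ref{local existence Gevrey MG thm} as a direct corollary of Theorem~\ref{local existence Gevrey thm general} applied to the Fourier multiplier $\mathbf{T}:=(-\Delta)^{\alpha/2}\mathbf{M}$. Accordingly, the only thing to verify is that $\mathbf{T}$ fits into the abstract framework of Section~\ref{wellposed in Gevrey class general} with the correct singular order, and then to read off the appropriate Gevrey range $s$ in each of the two cases $\alpha\in(-1,0]$ and $\alpha=-1$.

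First I would confirm that the mean-zero and divergence-free structure is preserved. The hypothesis on $\theta_{0}$ implies the Fourier coefficient $\widehat{\theta}(\0)=0$ is maintained by the flow, and the identity $\bk\cdot\widehat{\mathbf{M}}(\bk)=0$ is straightforward to read off from \eqref{MG Fourier symbol_1}--\eqref{MG Fourier symbol_3}: one checks
\begin{equation*}
k_{1}\widehat{\mathbf{M}}_{1}(\bk)+k_{2}\widehat{\mathbf{M}}_{2}(\bk)+k_{3}\widehat{\mathbf{M}}_{3}(\bk)=0\qquad\text{for }k_{3}\neq 0,
\end{equation*}
while $\widehat{\mathbf{M}}(k_{1},k_{2},0)=\0$ by convention. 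Since multiplication by the scalar $|\bk|^{\alpha}$ commutes with the projection onto the plane $\bk^{\perp}$, the velocity $u=(-\Delta)^{\alpha/2}\mathbf{M}\theta$ is divergence-free.

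Next I would verify the singular-operator bound \eqref{singular operator condition} for $\mathbf{T}$ with $r_{0}:=\alpha+1$. For $k_{3}\neq 0$, the denominator in \eqref{MG Fourier symbol_1}--\eqref{MG Fourier symbol_3} is bounded below by $4\Omega^{2}k_{3}^{2}|\bk|^{2}$, and an elementary comparison with the numerators yields $|\widehat{\mathbf{M}}(\bk)|\le C(\beta,\eta,\Omega)\,|\bk|$; this estimate is the one already recorded by the authors in the paragraph preceding the theorem. Multiplication by $|\bk|^{\alpha}$ therefore gives
\begin{equation*}
|\widehat{\mathbf{T}}(\bk)|=|\bk|^{\alpha}|\widehat{\mathbf{M}}(\bk)|\le C(\beta,\eta,\Omega)\,|\bk|^{\alpha+1},\qquad\bk\in\Z_{*}^{3},
\end{equation*}
so condition \eqref{singular operator condition} holds with $r_{0}=\alpha+1\in[0,1]$ precisely on the range $\alpha\in[-1,0]$.

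Finally I would split into the two cases and invoke the abstract theorem. For $\alpha\in(-1,0]$ we have $r_{0}=\alpha+1\in(0,1]$, so the first part of Theorem~\ref{local existence Gevrey thm general} produces a unique solution in $\Gs$ on some interval $[0,T_{*})$ with $T_{*}=T_{*}(\tau_{0},K_{0})>0$ for every Gevrey index $s\in[1,1/r_{0}]=[1,1/(\alpha+1)]$. For $\alpha=-1$ we have $r_{0}=0$ and the symbol of $\mathbf{T}$ is uniformly bounded; the second part of Theorem~\ref{local existence Gevrey thm general} then gives the corresponding $\Gs$-existence statement for every $s\ge 1$. There is essentially no obstacle here beyond the bookkeeping described above: the real work is done in Theorem~\ref{local existence Gevrey thm general}, and the only mildly non-routine point is the symbol estimate for $\widehat{\mathbf{M}}$, which is a direct consequence of the explicit formulas.
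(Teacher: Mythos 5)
Your overall route is exactly the paper's: Theorem~\ref{local existence Gevrey MG thm} is stated there as a direct consequence of Theorem~\ref{local existence Gevrey thm general}, with the only inputs being that $u$ is divergence-free and that $(-\Delta)^{\alpha/2}\mathbf{M}$ satisfies \eqref{singular operator condition} with $r_0=\alpha+1$, followed by the same case split ($r_0\in(0,1]$ for $\alpha\in(-1,0]$, giving $s\in[1,\frac{1}{\alpha+1}]$; $r_0=0$ for $\alpha=-1$, giving all $s\ge1$). Your divergence-free computation $\bk\cdot\widehat{\mathbf{M}}(\bk)=0$ is correct.

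However, one step in your justification of the symbol bound fails as written. Bounding the denominator $D(\bk)=4\Omega^2k_3^2|\bk|^2+(\beta^2/\eta)^2k_2^4$ from below by its first term alone suffices for $\widehat{\mathbf{M}}_1$ and $\widehat{\mathbf{M}}_2$, but for the third component it only gives
\begin{align*}
|\widehat{\mathbf{M}}_3(\bk)|\le \frac{(\beta^2/\eta)k_2^2(k_1^2+k_2^2)}{4\Omega^2k_3^2|\bk|^2}\le \frac{\beta^2}{4\Omega^2\eta}\cdot\frac{k_2^2}{k_3^2},
\end{align*}
which along $k_2=N$, $k_3=1$ is of size $|\bk|^2$, not $|\bk|$. (No bound better than order one is available anyway: along $\bk=(j^2,j,1)$, essentially the sequence exploited in Section~\ref{Applications to physical models section}, $\widehat{\mathbf{M}}_3(\bk)$ genuinely attains size $|\bk|$.) The repair is one line and requires using \emph{both} terms of $D$: for $k_2\neq0$ (the case $k_2=0$ being trivial since then $\widehat{\mathbf{M}}_3=0$), the arithmetic--geometric mean inequality gives $D(\bk)\ge 4\Omega(\beta^2/\eta)|k_3|\,|\bk|\,k_2^2$, whence
\begin{align*}
|\widehat{\mathbf{M}}_3(\bk)|\le \frac{k_1^2+k_2^2}{4\Omega|k_3|\,|\bk|}\le \frac{|\bk|}{4\Omega}\qquad (k_3\neq0).
\end{align*}
With this correction your verification of \eqref{singular operator condition} with $r_0=\alpha+1\in[0,1]$ is complete, and the remainder of your argument coincides with the paper's.
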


When $\alpha\in[0,1]$, we claim that the conditions (C1)--(C6) listed in Section~\ref{illposed in Gevrey class general} hold for the symbol $\widehat{(-\Delta)^\frac{\alpha}{2} \mathbf{M}}$ when $a=1$. Then conditions (C1) and (C2) follow directly from the definition of $\widehat{\mathbf{M}}$. For the conditions (C3) and (C4), we define $\bbj$ by
\begin{align*}
\bbj:=(j^2,j),\qquad j\in\N.
\end{align*}
Then it is clear that $\bbj\in\Z^{2}$ and $|\bbj|=\sqrt{j^4+j^2}\to\infty$ as $j\to\infty$. And using \eqref{MG Fourier symbol_3}, for each $j\in\N$, we readily have
\begin{align*}
\widehat{(-\Delta)^\frac{\alpha}{2} \mathbf{M}_3}(\bbj,n)=\frac{(\beta^2/\eta)j^2(j^4+j^2)(j^4+j^2+n^2)^\frac{\alpha}{2}}{4\Omega^2n^2(j^4+j^2+n^2)+(\beta^2/\eta)^2j^4}\to0\mbox{ as $n\to\infty$},
\end{align*}
and
\begin{align*}
\widehat{(-\Delta)^\frac{\alpha}{2} \mathbf{M}_3}(\bbj,(n+1))<\widehat{(-\Delta)^\frac{\alpha}{2} \mathbf{M}_3}(\bbj,n)\mbox{ for all $n\in\mathbb{N}$},
\end{align*}
hence conditions (C3) and (C4) hold as well. Finally, to see why conditions (C5) and (C6) hold, for each $j$, $n\in\N$, there are positive constants $\tilde C_1$, $\tilde C_2$ which are independent of $j$ and $n$ such that 
\begin{align*}
\widehat{(-\Delta)^\frac{\alpha}{2} \mathbf{M}_3}(\bbj,n)\le\tilde C_1|\bbj|^3n^{\alpha-2},
\end{align*}
and
\begin{align*}
\widehat{(-\Delta)^\frac{\alpha}{2} \mathbf{M}_3}(\bbj,1)\widehat{(-\Delta)^\frac{\alpha}{2} \mathbf{M}_3}(\bbj,2)\ge\tilde C_2|\bbj|^{2\alpha+2}.
\end{align*}
If we choose $\beta_{1}=3$, $\beta_{2}=\alpha-2$ and $\beta_{3}=\alpha+1$, then by direct computation, for $\alpha\in[0,1]$, we can see that $\beta_{1}$, $\beta_{2}$ and $\beta_{3}$ satisfy \eqref{conditions on betas} with
\begin{align*}
\frac{\beta_{3}-\beta_{1}}{\beta_{3}\beta_{2}}=\frac{\alpha+1-3}{(\alpha+1)(\alpha-2)}=\frac{1}{\alpha+1}.
\end{align*}
Therefore, all the conditions (C1)--(C6) hold for $\widehat{(-\Delta)^\frac{\alpha}{2} \mathbf{M}}$ and hence we can apply Theorem~\ref{local ill-posed Gevrey thm general} to obtain the following result:

\begin{thm}\label{local ill-posed Gevrey MG thm}
Let $\alpha\in[0,1]$, $q>\alpha+\frac{7}{4}$ and $s>\frac{1}{\alpha+1}$. Then the equation \eqref{abstract active scalar eqn MG} is locally Lipschitz $(H^q , \Gs_{\tau})$ ill-posed for $\tau > 0$.
\end{thm}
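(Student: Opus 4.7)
The plan is to invoke Theorem~\ref{local ill-posed Gevrey thm general} directly with $\mathbf{T} = (-\Delta)^{\alpha/2}\mathbf{M}$, $r_0 = \alpha+1 \in [1,2]$, $d = 3$, and $a = 1$. The bound \eqref{singular operator condition} follows at once from \eqref{MG Fourier symbol_1}-\eqref{MG Fourier symbol_3} together with the lower bound on the denominator, so the substantive task has two parts: first verify the structural conditions (C1)--(C6), and then locate exponents $(\beta_1, \beta_2, \beta_3)$ for which the abstract Gevrey threshold $\frac{\beta_3 - \beta_1}{\beta_3 \beta_2}$ equals the target value $\frac{1}{\alpha+1}$.

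For (C1)--(C2) I would simply read off the symbol: setting $k_1 = k_2 = 0$, $k_3 = 1$ annihilates every numerator in \eqref{MG Fourier symbol_1}-\eqref{MG Fourier symbol_3}, and the representation
\begin{align*}
\widehat{(-\Delta)^{\alpha/2}\mathbf{M}}_3(\bk) = \frac{(\beta^2/\eta)\, k_2^2 (k_1^2+k_2^2)\, |\bk|^\alpha}{4\Omega^2 k_3^2 |\bk|^2 + (\beta^2/\eta)^2 k_2^4}
\end{align*}
makes positivity and evenness in each component manifest. For (C3)--(C6) I would then choose the anisotropic sequence $\bbj := (j^2, j) \in \mathbb{Z}^2$, whose length tends to infinity, and read off the dependence on the variable $n$. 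A routine estimate produces (C3), (C4), together with
\begin{align*}
\widehat{(-\Delta)^{\alpha/2}\mathbf{M}}_3(\bbj, n) \le \tilde C_1 |\bbj|^3 n^{\alpha-2}, \qquad \widehat{(-\Delta)^{\alpha/2}\mathbf{M}}_3(\bbj, 1)\,\widehat{(-\Delta)^{\alpha/2}\mathbf{M}}_3(\bbj, 2) \ge \tilde C_2 |\bbj|^{2\alpha+2},
\end{align*}
so one may take $\beta_1 = 3$, $\beta_2 = \alpha - 2$, $\beta_3 = \alpha + 1$. The admissibility conditions \eqref{conditions on betas} are then direct to check for $\alpha \in [0,1]$ (note $\beta_1+\beta_2 = \alpha+1 = r_0$ with equality), and
\begin{align*}
\frac{\beta_3 - \beta_1}{\beta_3 \beta_2} = \frac{\alpha - 2}{(\alpha+1)(\alpha-2)} = \frac{1}{\alpha+1}
\end{align*}
matches the hypothesis $s > \frac{1}{\alpha+1}$ exactly. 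Since $r_0 + \tfrac{d}{4} = \alpha + 1 + \tfrac{3}{4} = \alpha + \tfrac{7}{4}$ in dimension three, the regularity assumption $q > \alpha + \tfrac{7}{4}$ is exactly what Theorem~\ref{local ill-posed Gevrey thm general} asks for, and the theorem delivers the conclusion.

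The one genuinely delicate step is the choice of anisotropic scaling $b_1^{(j)} = j^2$, $b_2^{(j)} = j$. An isotropic sequence such as $(j,j)$ would let the $(\beta^2/\eta)^2 k_2^4$ term dominate the denominator at the expense of the numerator and would degrade $\beta_3$, producing a strictly larger (hence weaker) Gevrey threshold. The pairing $b_1 \sim b_2^2$ is precisely what balances $k_2^2(k_1^2+k_2^2) \sim |\bbj|^3$ against $(\beta^2/\eta)^2 k_2^4 \sim |\bbj|^2$ at $n = 1, 2$, yielding the sharp exponent $\beta_3 = \alpha + 1$ needed to recover the optimal threshold $\frac{1}{\alpha+1}$. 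Once this geometric calibration is in place, the remainder is bookkeeping.
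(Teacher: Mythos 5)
Your proposal is correct and takes essentially the same route as the paper's own proof: both reduce the statement to Theorem~\ref{local ill-posed Gevrey thm general} with $r_0=\alpha+1$, $d=3$, $a=1$, verify (C1)--(C6) using the same anisotropic sequence $\bbj=(j^2,j)$, and choose $\beta_1=3$, $\beta_2=\alpha-2$, $\beta_3=\alpha+1$, yielding the threshold $\frac{\beta_3-\beta_1}{\beta_3\beta_2}=\frac{1}{\alpha+1}$ and the regularity requirement $q>r_0+\frac{d}{4}=\alpha+\frac{7}{4}$ exactly as in the paper. Your closing observation about why the calibration $b_1^{(j)}\sim (b_2^{(j)})^2$ is what secures the sharp exponent $\beta_3=\alpha+1$ is accurate and is left implicit in the paper.
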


\begin{rem}\label{discussion on sharp dichotomy}
Some remarks on Theorem~\ref{local existence Gevrey MG thm} and Theorem~\ref{local ill-posed Gevrey MG thm}:
\begin{itemize}
\item For $\alpha=0$, Theorem~\ref{local existence Gevrey MG thm} shows that the non-diffusive MG equation is locally well-posed in analytic space, while Theorem~\ref{local ill-posed Gevrey MG thm} shows that it is locally Lipschitz $(H^q , \Gs_{\tau})$ ill-posed for $\tau >0 $, $q>\frac{7}{4}$ and $s>1$. The results yield a more complete picture of the local-in-time well-posedness in Gevrey classes, which also strengthen some previous results on non-diffusive MG equation \cite{FS19, FV11b}. 
\item For $\alpha>0$, since $\frac{1}{\alpha+1}<1$, Theorem~\ref{local ill-posed Gevrey MG thm} implies that the equation \eqref{abstract active scalar eqn MG} is locally Lipschitz $(H^q , \Gs_{\tau})$ ill-posed for $\tau >0 $, $q>\alpha+\frac{7}{4}$ and $s\ge1$. In particular, it shows that for all $\alpha\in(0,1]$, the equation \eqref{abstract active scalar eqn MG} is not well-posed in the class of analytic functions.
\item The results obtained in Theorem~\ref{local existence Gevrey MG thm} and Theorem~\ref{local ill-posed Gevrey MG thm} give a sharp dichotomy across the value $\alpha = 0$. More precisely, for $\alpha<0$, the equations are locally well-posed in Gevrey spaces, while for $\alpha>0$, they are ill-posed in Gevrey spaces in the sense of Hadamard. Such dichotomy mainly comes from the transition of the order $\alpha+1$ singular operator $(-\Delta)^\frac{\alpha}{2}\mathbf{M}$, since in the spirit of the Cauchy-Kowalewskaya result \cite{CS00, GD10, KTVZ11}, it is possible to obtain local existence and uniqueness of solutions in spaces of real-analytic functions provided that the derivative loss in the nonlinearity $u\cdot\nabla \theta$ is of order at most one.
\end{itemize}
\end{rem}

\subsection{The singular incompressible porous media equation}\label{IPM subsection}

Next, we consider the following class of active scalar equations in $\mathbb{T}^2\times(0,\infty)$:
\begin{align}
\label{abstract active scalar eqn IPM}  
\left\{ \begin{array}{l}
\partial_t\theta+u\cdot\nabla\theta=0 \\
u=(-\Delta)^\frac{\alpha}{2}\mathbf{I}\theta,\,\,\,\theta(\bx,0)=\theta_0(\bx),
\end{array}\right.
\end{align}
where $\alpha\in[0,2)$. Here $\mathbf{I}=(\mathbf{I}_1,\mathbf{I}_2)$ is a Fourier multiplier operator with symbols $\mathbf{I}_1$, $\mathbf{I}_2$ given explicitly by \eqref{IPM Fourier symbol_1}-\eqref{IPM Fourier symbol_2}. For $\alpha\in(0,2]$, the system \eqref{abstract active scalar eqn IPM} is called the singular incompressible porous media (SIPM) equation. In particular, for $\alpha=0$, the system \eqref{abstract active scalar eqn IPM} reduces to the IPM equation given by \eqref{abstract active scalar eqn IPM intro}-\eqref{IPM Fourier symbol_2}. Furthermore, one can show that $u$ is divergence-free and $(-\Delta)^\frac{\alpha}{2} \mathbf{I}$ is an even, singular operator of order $\alpha$ with 
\begin{align*}
|\widehat{(-\Delta)^\frac{\alpha}{2}\mathbf{I}}(\bk)|\le C|\bk|^{\alpha},
\end{align*}
where $C$ is a fixed positive constant and is independent of $\alpha$ and $\bk$. As a direct consequence of Theorem~\ref{local existence Gevrey thm general}, we immediately obtain the following result for the case when $\alpha\in[0,1]$:

\begin{thm}\label{local existence Gevrey IPM thm}
For $\alpha\in(0,1]$, we fix $s\in[1,\frac{1}{\alpha}]$ and $K_0>0$. Let $\theta_0\in \Gs$ with radius of convergence $\tau_0>0$ and
\begin{align*}
\|(-\Delta)^\frac{r}{2}e^{\tau_0(-\Delta)^\frac{1}{2s}}\theta_0\|_{L^2}\le K_0,
\end{align*}
where $r>3$. There exist $T_*=T_*(\tau_0, K_0) > 0$ and a unique solution in $\Gs$ on $[0, T_*)$ to the initial value problem associated to \eqref{abstract active scalar eqn IPM}.

When $\alpha=0$, there exist $T_*=T_*(\tau_0, K_0) > 0$ and a unique solution in $\Gs$ on $[0, T_*)$ to the initial value problem associated to \eqref{abstract active scalar eqn IPM} for all $s\ge1$.
\end{thm}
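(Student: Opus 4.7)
The plan is to read Theorem~\ref{local existence Gevrey IPM thm} off the abstract well-posedness result Theorem~\ref{local existence Gevrey thm general}. To invoke the latter with $r_0 = \alpha$, I only need to verify that the operator $\mathbf{T} := (-\Delta)^{\alpha/2}\mathbf{I}$ fits the abstract framework of \eqref{general abstract active scalar eqn nondiffusive}: its Fourier symbol satisfies the order-$\alpha$ bound \eqref{singular operator condition}, and the resulting vector field is divergence-free. Once these two symbol-level facts are in place, the conclusion transfers verbatim.

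For the symbol bound, inspection of \eqref{IPM Fourier symbol_1}--\eqref{IPM Fourier symbol_2} shows that the Riesz-type symbols satisfy $|\widehat{\mathbf{I}}_j(\bk)| \le 1$ uniformly on $\Z^2_*$ for $j=1,2$. Multiplying by the symbol $|\bk|^{\alpha}$ of $(-\Delta)^{\alpha/2}$ then yields
\[
|\widehat{(-\Delta)^{\alpha/2}\mathbf{I}}(\bk)| = |\bk|^{\alpha}\,|\widehat{\mathbf{I}}(\bk)| \le \sqrt{2}\,|\bk|^{\alpha},
\]
which is precisely \eqref{singular operator condition} with $r_0 = \alpha$. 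For the divergence-free property, a direct computation gives
\[
\bk \cdot \widehat{\mathbf{I}}(\bk) = \frac{-k_1^2 k_2 + k_2 k_1^2}{k_1^2 + k_2^2} = 0,
\]
and since the scalar multiplier $(-\Delta)^{\alpha/2}$ commutes with the operation $\bk\,\cdot$, the full operator $\mathbf{T}$ is divergence-free as well.

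With these two facts in hand, Theorem~\ref{local existence Gevrey thm general} applied with $r_0 = \alpha$ yields, for each $\alpha \in (0,1]$ and each $s \in [1, 1/\alpha]$, a positive time $T_* = T_*(\tau_0, K_0)$ and a unique solution in $\Gs$ on $[0, T_*)$; for $\alpha = 0$ the operator $\mathbf{I}$ is of order zero, so the $r_0 = 0$ case of Theorem~\ref{local existence Gevrey thm general} gives the same conclusion for every $s \ge 1$. The genuine mathematical content lives in the proof of Theorem~\ref{local existence Gevrey thm general} itself (most notably in the Gevrey commutator estimates for the pieces $\mathcal{R}_1$ and $\mathcal{R}_2$); at the level of Theorem~\ref{local existence Gevrey IPM thm} no separate obstacle arises beyond the elementary symbol-level checks above.
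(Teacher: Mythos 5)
Your proposal is correct and follows exactly the paper's route: the paper likewise obtains Theorem~\ref{local existence Gevrey IPM thm} as a direct consequence of Theorem~\ref{local existence Gevrey thm general} with $r_0=\alpha$ (and the $r_0=0$ case when $\alpha=0$), after noting that $(-\Delta)^{\alpha/2}\mathbf{I}$ is a divergence-free Fourier multiplier whose symbol satisfies $|\widehat{(-\Delta)^{\alpha/2}\mathbf{I}}(\bk)|\le C|\bk|^{\alpha}$. Your explicit verifications of the uniform bound on $\widehat{\mathbf{I}}$ and of $\bk\cdot\widehat{\mathbf{I}}(\bk)=0$ are precisely the symbol-level checks the paper relies on.
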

When $\alpha\in[1,2)$, we claim that the conditions (C1)--(C6) listed in Section~\ref{illposed in Gevrey class general} hold for the symbol $(-\Delta)^\frac{\alpha}{2} \mathbf{I}$. For simplicity, we fix $a=1$. Then conditions (C1) and (C2) follow directly from the definition of $\widehat{\mathbf{I}}$. For the conditions (C3) and (C4), we define $\bbj$ by
\begin{align*}
\bbj:=j,\qquad j\in\N.
\end{align*}
Then it is clear that $\bbj\in\Z$ and $|\bbj|=j\to\infty$ as $j\to\infty$. And using \eqref{IPM Fourier symbol_2}, for each $j\in\N$, we readily have
\begin{align*}
\widehat{(-\Delta)^\frac{\alpha}{2} \mathbf{I}_2}(\bbj,n)=\frac{j^2}{(j^2+n^2)^\frac{2-\alpha}{2}}\to0\mbox{ as $n\to\infty$},
\end{align*}
and
\begin{align*}
\widehat{(-\Delta)^\frac{\alpha}{2} \mathbf{I}_2}(\bbj,(n+1))<\widehat{(-\Delta)^\frac{\alpha}{2} \mathbf{I}_2}(\bbj,n)\mbox{ for all $n\in\mathbb{N}$},
\end{align*}
hence conditions (C3) and (C4) hold as well. Finally, to see why conditions (C5) and (C6) hold, for each $j$, $n\in\N$, there are positive constants $\tilde C_1$, $\tilde C_2$ which are independent of $j$ and $n$ such that 
\begin{align*}
\widehat{(-\Delta)^\frac{\alpha}{2} \mathbf{I}_2}(\bbj,n)\le\tilde C_1|\bbj|^2n^{\alpha-2},
\end{align*}
and
\begin{align*}
\widehat{(-\Delta)^\frac{\alpha}{2} \mathbf{I}_2}(\bbj,1)\widehat{(-\Delta)^\frac{\alpha}{2} \mathbf{I}_2}(\bbj,2)\ge\tilde C_2|\bbj|^{2\alpha}.
\end{align*}
If we choose $\beta_{1}=2$, $\beta_{2}=\alpha-2$ and $\beta_{3}=\alpha$, then by direct computation, for $\alpha\in[1,2)$, we can see that $\beta_{1}$, $\beta_{2}$ and $\beta_{3}$ satisfy \eqref{conditions on betas} with
\begin{align*}
\frac{\beta_{3}-\beta_{1}}{\beta_{3}\beta_{2}}=\frac{\alpha-2}{(\alpha)(\alpha-2)}=\frac{1}{\alpha}.
\end{align*}
Therefore, all the conditions (C1)--(C6) hold for $\widehat{(-\Delta)^\frac{\alpha}{2} \mathbf{I}}$ and hence we can apply Theorem~\ref{local ill-posed Gevrey thm general} to obtain the following result:

\begin{thm}\label{local ill-posed Gevrey IPM thm}
Let $\alpha\in[1,2)$, $q>\alpha+\frac{1}{2}$ and $s>\frac{1}{\alpha}$. Then the equation \eqref{abstract active scalar eqn IPM} is locally Lipschitz $(H^q , \Gs_{\tau})$ ill-posed for $\tau > 0$.
\end{thm}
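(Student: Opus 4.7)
The plan is to deduce Theorem~\ref{local ill-posed Gevrey IPM thm} directly from the abstract ill-posedness result Theorem~\ref{local ill-posed Gevrey thm general}, by verifying that the operator $\mathbf{T}=(-\Delta)^{\alpha/2}\mathbf{I}$ satisfies the hypotheses of that theorem with the right choice of parameters. First I would confirm that the singularity bound \eqref{singular operator condition} holds with $r_0=\alpha\in[1,2]$; this follows from the explicit formulas \eqref{IPM Fourier symbol_1}--\eqref{IPM Fourier symbol_2} since $|\widehat{\mathbf{I}}(\bk)|\le 1$ and multiplication by $|\bk|^\alpha$ contributes exactly the required order. I would then fix $a=1$ and note that the condition $q>r_0+d/4=\alpha+1/2$ in the hypothesis matches the theorem's requirement on the Sobolev index.

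Next I would verify conditions (C1)--(C6) for the third component (here the $d$-th component, $d=2$, i.e.\ $\widehat{\mathbf{I}_2}$). Conditions (C1) and (C2) are immediate: $\widehat{\mathbf{I}}(0,a)=(0,0)$ since $\widehat{\mathbf{I}}_1$ and $\widehat{\mathbf{I}}_2$ vanish when $k_1=0$, and $\widehat{\mathbf{I}_2}(\bk)=k_1^2/(k_1^2+k_2^2)$ multiplied by $|\bk|^\alpha$ is manifestly a real positive rational (on its support) and even in $\bk$. For the remaining conditions I would choose the sequence $\bbj:=j$, $j\in\N$, so that $|\bbj|=j\to\infty$, and observe that
\begin{equation*}
\widehat{(-\Delta)^{\alpha/2}\mathbf{I}_2}(\bbj,n)=\frac{j^2}{(j^2+n^2)^{(2-\alpha)/2}}.
\end{equation*}
From this closed form, (C3) (decay as $n\to\infty$, using $\alpha<2$) and (C4) (monotone decrease in $n$) are elementary to check.

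The substantive verification is of (C5) and (C6), together with consistency of the exponents with \eqref{conditions on betas}. I would bound $\widehat{(-\Delta)^{\alpha/2}\mathbf{I}_2}(\bbj,n)\le \tilde C_1 j^2 n^{\alpha-2}$ by discarding $j^2$ from the denominator, giving $\beta_1=2$, $\beta_2=\alpha-2$. For (C6), I would evaluate at $n=1,2$ and use $(j^2+1)^{(2-\alpha)/2}(j^2+4)^{(2-\alpha)/2}\le C j^{2(2-\alpha)}$ to obtain the lower bound $\tilde C_2 j^{2\alpha}$, giving $\beta_3=\alpha$. With these choices the three inequalities in \eqref{conditions on betas} reduce to $0<\alpha\le 2$, $\alpha+(\alpha-2)=2\alpha-2\le \alpha$, and $-2\le\alpha-2<0$, all of which hold precisely on $\alpha\in[1,2)$.

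Finally I would compute the critical Gevrey index
\begin{equation*}
\frac{\beta_3-\beta_1}{\beta_3\beta_2}=\frac{\alpha-2}{\alpha(\alpha-2)}=\frac{1}{\alpha},
\end{equation*}
which matches the hypothesis $s>1/\alpha$ in the statement, and invoke Theorem~\ref{local ill-posed Gevrey thm general} to conclude. The main obstacle is really just bookkeeping: making sure the choice $(\beta_1,\beta_2,\beta_3)=(2,\alpha-2,\alpha)$ simultaneously satisfies every inequality in \eqref{conditions on betas} on the whole range $\alpha\in[1,2)$ and produces the advertised Gevrey threshold. Everything else is a direct translation of the abstract theorem to this concrete symbol.
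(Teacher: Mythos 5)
Your proposal is correct and follows essentially the same route as the paper's own proof: the same choice $a=1$ and sequence $\bbj=j$, the same closed-form expression $\widehat{(-\Delta)^{\alpha/2}\mathbf{I}_2}(\bbj,n)=j^2/(j^2+n^2)^{(2-\alpha)/2}$, the same exponents $(\beta_1,\beta_2,\beta_3)=(2,\alpha-2,\alpha)$ yielding the threshold $\frac{\beta_3-\beta_1}{\beta_3\beta_2}=\frac{1}{\alpha}$, and the conclusion via Theorem~\ref{local ill-posed Gevrey thm general}. One trivial slip in your bookkeeping: the check of $\beta_1+\beta_2\le r_0$ should read $2+(\alpha-2)=\alpha\le\alpha$ rather than $2\alpha-2\le\alpha$, though both inequalities happen to hold on $\alpha\in[1,2)$, so the conclusion is unaffected.
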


\begin{rem}
Some remarks on Theorem~\ref{local existence Gevrey IPM thm} and Theorem~\ref{local ill-posed Gevrey IPM thm}:
\begin{itemize}
\item For $\alpha=0$, Theorem~\ref{local existence Gevrey IPM thm} immediately implies that the IPM equation is locally well-posed in $G^s$ for all $s\ge1$.
\item For $\alpha=1$, Theorem~\ref{local existence Gevrey IPM thm} shows that the SIPM equation is locally well-posed in analytic space, while Theorem~\ref{local ill-posed Gevrey IPM thm} shows that it is locally Lipschitz $(H^q , \Gs_{\tau})$ ill-posed for $\tau >0 $, $q>\frac{3}{2}$ and $s>1$. 
\item For $\alpha\in(1,2)$, since $\frac{1}{\alpha}<1$, Theorem~\ref{local ill-posed Gevrey IPM thm} implies that the SIPM equation is locally Lipschitz $(H^q , \Gs_{\tau})$ ill-posed for $\tau >0 $, $q>\alpha+\frac{1}{2}$ and $s\ge1$. In particular, it shows that for all $\alpha\in(1,2)$, the equation is not well-posed in the class of analytic functions.
\item Similar to MG equations, the results obtained in Theorem~\ref{local existence Gevrey IPM thm} and Theorem~\ref{local ill-posed Gevrey IPM thm} give a sharp dichotomy across the value $\alpha = 1$ for the SIPM equation.
\end{itemize}
\end{rem}


\section*{Acknowledgment} S. Friedlander is supported by NSF DMS--1613135. A. Suen is supported by Hong Kong General Research Fund (GRF) grant project number 18300720, 18300821, 18300622 and Dean's Research Fund of the Faculty of Liberal Arts and Social Science, The Education University of Hong Kong, HKSAR, China (Project No. FLASS/DRF 04723). F. Wang is supported by the National Natural Science Foundation of China (No. 12101396 and 12161141004).

\bibliographystyle{amsalpha}

\bibliography{References_for_active_scalar}

\end{document}